\newtheorem{theorem}{Theorem}[section]
\newtheorem{corollary}[theorem]{Corollary}
\newtheorem{lemma}[theorem]{Lemma}
\newtheorem{prop}[theorem]{Proposition}
\theoremstyle{remark}
\newtheorem{rem}[theorem]{\bf Remark}
\theoremstyle{definition}
\newtheorem{definition}[theorem]{Definition}
\theoremstyle{definition}
\theoremstyle{remark}
\theoremstyle{remark}
\newcommand{\dbar}{\bar\partial}
\newcommand{\im}{{\rm im}}
\newcommand{\tr}{{\rm tr}}
\newcommand{\trg}{{\rm tr}_G}
\newcommand{\mbar}{M\llap{\raise 1.2ex\hbox{---}}}
\newcommand{\Hmm}[1]{\leavevmode{\marginpar{\tiny%
$\hbox to 0mm{\hspace*{-0.5mm}$\leftarrow$\hss}%
\vcenter{\vrule depth 0.1mm height 0.1mm width \the\marginparwidth}%
\hbox to 0mm{\hss$\rightarrow$\hspace*{-0.5mm}}$\\\relax\raggedright #1}}}
\newcommand{\I}{\mathfrak{Im}}
\newcommand{\R}{\mathfrak{Re}}
\begin{document}

\title{Bergman spaces of natural $G$-manifolds}
\author{Giuseppe Della Sala}
\author{Joe J. Perez}

\begin{abstract}Let $G$ be a unimodular Lie group, $X$ a compact manifold with boundary, and $M$ the total space of a principal bundle $G\to M\to X$ so that $M$ is also a strongly pseudoconvex complex manifold. In this work, we show that if there exists a point $p\in bM$ such that $T_pG$ is contained in the complex tangent space $T_p^c bM$ of $bM$ at $p$, then the Bergman space of $M$ is large. Natural examples include the gauged $G$-complexifications of Heinzner, Huckleberry, and Kutzschebauch.\end{abstract}

\maketitle
{\tiny\tableofcontents}
%

\section{Introduction}

\subsection{General setting}

Let $M$ be a complex manifold with nonempty, smooth, strongly pseudoconvex boundary $bM$ and $\bar M=M\cup bM$ so that $M$ is the interior of $\bar M$, and ${\rm dim}_{\mathbb C}(M)=n+1$.  Also assume that $\bar M$ is a closed subset in $\widetilde{M}$, a complex neighborhood of $\bar M$ of the same dimension on which the complex structure on $\widetilde{M}$ extends that of $M$ and every point of $\bar M$ is an interior point of $\widetilde{M}$. Denote by $\mathcal O(M)$ the space of holomorphic functions on $M$.

Suppose that a Lie group $G$ acts by biholomorphisms on $\widetilde M$, and that $M$ is invariant under this action. We assume that the $G$ acts on $\overline M$  freely, properly, and cocompactly. This means that the map $G\times M \to M\times M$ given by $(g,p)\to (g\cdot p, p)$ is proper, that the stabilizer of each $p\in M$ is trivial, and that the quotient space $\overline M/G$ is compact, where from now on we denote by $g\cdot p$ the action of the group element $g\in G$ on the point $p\in M$.

Choosing a $G$-invariant measure $\mu$, smooth on $\widetilde{M}$, and restricting it to $\bar M$, we define the \emph{Bergman space} $L^2\mathcal O(M,\mu)$ as $L^2(M,\mu)\cap\mathcal O(M)$. Compactness of the quotient and invariance of the measure imply that $L^2\mathcal O(M,\mu)$ does not depend on $\mu$, provided that the measure is smooth on $\overline M$. Thus we will omit mention of the measure henceforth and write $L^2\mathcal O(M)$ for the Bergman space. 

In the following result, we use the notation $T^c_p(bM)$ for the complex tangent space $T_p(bM)\cap iT_p(bM)$, and the unimodularity assumption means that the Lie group $G$ admits a biinvariant Haar measure, see \cite{P}.
\begin{theorem}\label{main}
Let $G$ be a unimodular group acting properly, freely and cocompactly by biholomorphisms on a complex manifold $M$ with strongly pseudoconvex boundary $bM$ and suppose that, for some $p\in bM$, we have $T_p(G\cdot p)\subset T^c_p(bM)$. Then we have ${\rm dim}_G (L^2\mathcal O(M))= \infty$.\end{theorem}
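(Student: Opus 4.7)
The plan is to compute the $G$-dimension via the $G$-trace of the Bergman projection, $\dim_G L^2\mathcal O(M)=\Trg(P)=\int_{\mathcal F}K_M(x,x)\,d\mu(x)$ (with $\mathcal F$ a fundamental domain for the $G$-action and $K_M$ the Bergman kernel), and to show this integral diverges because $K_M$ blows up near $p$ at a non-integrable rate. I would use the extremal characterization $K_M(x,x)=\sup\{|f(x)|^2/\|f\|_{L^2(M)}^2:f\in L^2\mathcal O(M)\setminus\{0\}\}$ to reduce the problem to constructing, for $x\in M$ near $p$, global holomorphic $f\in L^2(M)$ with large pointwise value relative to its global $L^2$-norm.

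In local holomorphic coordinates $(z_1,\dots,z_{n+1})$ near $p$ with $bM$ in Levi normal form $\rho=-2\mathfrak{Re}(z_{n+1})+\sum_{j=1}^{n}|z_j|^2+O(|z|^3)$ and $T_p^c(bM)=\operatorname{span}_{\mathbb C}\{\partial_{z_1},\dots,\partial_{z_n}\}$, the domain $M$ approximates the Siegel upper half-space, whose Bergman kernel satisfies $K_{\mathrm{Siegel}}(x,x)\sim\operatorname{dist}(x,bM)^{-(n+2)}$. I would transplant the explicit Siegel Bergman functions $f$ to $M$ by multiplying by a smooth cutoff $\chi$ supported in a small ball $B\subset M$ around $p$ (contained in a single fundamental-domain region), and applying Hörmander's $L^2$-theorem for $\dbar$ on the strongly pseudoconvex $M$ with a strictly plurisubharmonic weight growing at $bM$; this produces a corrector $u\in L^2(M)$ with $\dbar u=\dbar(\chi f)$ and $\|u\|_{L^2}$ controlled by $\|\dbar\chi\cdot f\|_{L^2}$. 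Then $F:=\chi f-u\in L^2\mathcal O(M)$ agrees with $f$ up to an arbitrarily small $L^2$-error on a smaller concentric ball $B'$, yielding the comparison $K_M(x,x)\gtrsim K_{\mathrm{Siegel}}(x,x)$ on $B'$; since $\operatorname{dist}(x,bM)^{-(n+2)}$ is not integrable in the normal direction, $\int_{B'\cap\mathcal F}K_M(x,x)\,d\mu(x)=\infty$.

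The main obstacle is preserving the global $L^2$-bound on the corrector $u$ (hence on $F$) when passing from the local Siegel model to the non-compact $G$-manifold $M$; this is exactly where the hypothesis $T_p(G\cdot p)\subset T_p^c(bM)$ is decisive. Without it, the orbit $G\cdot p$ would have a component in the Reeb (characteristic) direction of $bM$, along which Siegel-type holomorphic functions are not $L^2$ (as is visible already in the explicit $\mathbb R$-Reeb action on the Siegel half-space, whose ``Bergman space modulo $\mathbb R$'' is trivial); the Hörmander weight would not be compatible with the $G$-structure and $u$ would fail to be globally $L^2$. Under the stated tangency hypothesis, by $G$-equivariance the orbit is complex-tangential along all of $G\cdot p$, a suitable plurisubharmonic weight can be arranged to have bounded contribution along the orbit (using the $G$-invariant measure guaranteed by unimodularity), and the $L^2$-bound on $F$ survives, completing the comparison of Bergman kernels and hence the divergence of $\Trg(P)$.
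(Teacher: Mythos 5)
There is a genuine gap, and it sits exactly at the step you flag as the ``main obstacle.'' Your plan rests on solving $\dbar u=\dbar(\chi f)$ on all of $M$ with a global $L^2$ bound on $u$ via ``H\"ormander's theorem with a strictly plurisubharmonic weight.'' But under the hypotheses of the theorem $M$ is not assumed Stein or pseudoconvex as a manifold --- only its \emph{boundary} is strongly pseudoconvex --- and no global strictly plurisubharmonic weight (or exhaustion) is available. This is not a technicality: the example from \cite{GHS} cited in the introduction is a strongly pseudoconvex manifold with a cocompact action by biholomorphisms whose Bergman space is trivial, so no argument of the form ``strong pseudoconvexity of $bM$ $+$ weighted $L^2$ estimates $\Rightarrow K_M(x,x)\gtrsim$ Siegel kernel near $p$'' can be correct; some quantitative use of the tangency hypothesis must enter the solvability step itself. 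Your proposed mechanism for this --- that ``a suitable plurisubharmonic weight can be arranged to have bounded contribution along the orbit'' --- is not an argument: the hypothesis $T_p(G\cdot p)\subset T^c_p(bM)$ is a condition on one orbit at the boundary and produces neither a global weight on $M$ nor an a priori $L^2$ bound for the corrector; moreover, even granting an $L^2$ bound on $u$, your comparison $K_M(x,x)\gtrsim K_{\rm Siegel}(x,x)$ needs \emph{pointwise} control of $u$ at points tending to $p$, i.e.\ regularity of $u$ up to $bM$, which is the $\dbar$-Neumann subelliptic machinery rather than interior estimates, and the claim that $\|u\|_{L^2}$ can be made ``arbitrarily small'' is unjustified since $\|\dbar\chi\cdot f\|_{L^2}$ is only bounded, not small. (Your reduction $\dim_G L^2\mathcal O(M)=\Trg(P)=\int_{\mathcal F}K_M(x,x)\,d\mu$ is a reasonable alternative framing, though for nondiscrete $G$ it too would need justification in terms of the trace $\trg\otimes{\rm tr}_{L^2(M/G)}$.)

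The paper's route is structured precisely to avoid the missing global solvability. It uses the $G$-Fredholm property of $\square$ from \cite{P1}, which gives solvability of $\dbar u=\phi$ with $u$ smooth up to $bM$ only on a closed invariant subspace of finite $G$-codimension; to beat that defect one needs an infinite-$G$-dimensional family of data, built as convolutions $R_\Delta\chi\Lambda^{-\tau}$ and counted by the Paley--Wiener theorem of \cite{AL}. The entire analytic content of the present paper is then to show that these convolutions are \emph{not} smooth up to the boundary (the ``amenability'' condition): the tangency hypothesis forces the orbit to satisfy the order-3 condition (Prop.\ \ref{starholds}), which makes the orbit integrals a controlled perturbation of the Heisenberg-model integral \eqref{iop2}, whose $s$-derivatives diverge as $s\to0$. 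This is where the hypothesis does real work --- if the orbit had a component in the bad direction, the convolution would regularize the singularity of $\Lambda^{-\tau}$, consistent with the GHS counterexample. Your proposal, by contrast, never confronts either the finite-$G$-codimension defect or the possibility that the group-averaging destroys the singularity, because it presupposes a H\"ormander-type existence theorem that the hypotheses do not provide.
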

\begin{rem} A thorough description of the $G$-dimension can be found in \cite{P}, but for our purposes here let us just mention that this result implies that the Bergman space is infinite-dimensional over $\mathbb C$ as any space of positive $G$-dimension is infinite-dimensional over $\mathbb C$. An example in \cite{GHS} shows that the nontriviality of the Bergman space is not automatic, even in strongly pseudoconvex manifolds admitting a cocompact Lie group action by biholomorphisms.   \end{rem}
Since $G$ acts in $M$ by measure-preserving biholomorphisms, its action induces a unitary representation of $G$ in $L^2\mathcal O(M)$. By the methods of \cite{DSP}, it can be shown that the representation $\mathcal R$ obtained is nontrivial and in fact we have
\begin{corollary}$\ker\mathcal R$ is compact.\end{corollary}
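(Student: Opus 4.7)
Set $H=\ker\mathcal R$. Because $G$ acts smoothly by $\mu$-preserving biholomorphisms, the representation $\mathcal R$ is strongly continuous, so $H$ is a closed (and normal) subgroup of $G$, hence a Lie subgroup by Cartan's theorem. It suffices to show that $H$ is compact; we proceed by contradiction, showing that if $H$ were non-compact, every element of $L^2\mathcal O(M)$ would vanish identically, contradicting Theorem~\ref{main}.

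By definition every $f\in L^2\mathcal O(M)$ is pointwise $H$-invariant, i.e. $f(h\cdot p)=f(p)$ for all $h\in H$ and $p\in M$. Since $G$ acts freely and properly on $M$, so does the closed subgroup $H$, and $M\to M/H$ is a principal $H$-bundle whose orbits $H\cdot p$ are closed embedded submanifolds, parametrised diffeomorphically by $\phi_p\colon h\mapsto h\cdot p$. Applying the standard Fubini formula for free proper actions to the smooth $H$-invariant measure $\mu$ gives a disintegration
$$
\int_M |f|^2\,d\mu=\int_{M/H}\left(\int_{H\cdot p}|f|^2\,d\sigma_p\right)d\bar\mu(Hp),
$$
in which $\phi_p^{\ast}\sigma_p$ is invariant under left translation on $H$, hence equal to $c(p)\,dh$ for a positive continuous function $c$ on $M/H$ and $dh$ a left Haar measure on $H$.

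For an $H$-invariant $f$, $|f|^2$ is constant along each orbit, so the inner integral collapses to $|f(p)|^2 c(p)\int_H dh$. A non-compact locally compact group has infinite Haar measure, so the assumption that $H$ is non-compact forces $f(p)=0$ for $\bar\mu$-almost every $Hp\in M/H$; by continuity and holomorphicity, $f\equiv 0$ on $M$. Thus $L^2\mathcal O(M)=\{0\}$, contradicting Theorem~\ref{main}, which asserts positive $G$-dimension. Therefore $H=\ker\mathcal R$ is compact. The main technical step is the disintegration together with the identification of the fiber measure as a Haar multiple; this is routine given the principal bundle structure of $M\to M/H$ and the smoothness and $G$-invariance of $\mu$, and does not require further hypotheses beyond those already in force.
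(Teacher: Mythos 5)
Your argument is correct. Note that the paper itself gives no proof of this corollary: it is stated as a consequence of the methods of \cite{DSP}, and your reduction is exactly the natural one those methods rely on, namely that a closed non-compact subgroup $H\subset G$ acting freely and properly has orbits of infinite invariant volume, so an $H$-invariant function in $L^2(M,\mu)$ must vanish, and nontriviality of $L^2\mathcal O(M)$ (from Theorem~\ref{main}, whose tangency hypothesis is implicitly in force for the corollary) then forces $H=\ker\mathcal R$ to be compact. Two small points worth tightening: first, the identification of the fiber measures with Haar multiples via uniqueness of disintegration only gives invariance for a.e.\ orbit and fixed $h$; it is cleaner to argue locally, using the local triviality $\pi^{-1}(U)\cong H\times U$ of the principal $H$-bundle $M\to M/H$, where $H$-invariance of the smooth density immediately forces the form $c(b)\,dh\,db$ with $dh$ left Haar measure, after which $\int_{\pi^{-1}(U)}|f|^2\,d\mu=\infty$ unless $f$ vanishes there. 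Second, $\mathcal R(h)f=f$ gives $f\circ h=f$ only almost everywhere a priori; continuity of holomorphic functions upgrades this to pointwise invariance, as you indicate. With these routine justifications your proof is complete and self-contained, which is arguably an improvement over the paper's bare citation.
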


Our method of analyzing the Bergman space is derived from a partial differential equations approach to several complex variables due to Morrey, Spencer, Kohn, and others. This involves the analysis of a self-adjoint boundary value problem for an operator $\square$ similar to the Hodge Laplacian and is called the $\dbar$-Neumann problem. 

For any integers $p,q$ with $0\leq p,q\leq n+1$ denote by
$C^\infty(M,\Lambda^{p,q})$ the space of all $C^\infty$ forms of type $(p,q)$ on
$M$. 

The antiholomorphic exterior derivative $\dbar = \dbar|_{p,q}$ defines a linear map
$\dbar:C^\infty(M,\Lambda^{p,q})\to C^\infty(M,\Lambda^{p,q+1})$.
If it can be established that
\[\bar\partial u=\phi\] 
has a square-integrable, smooth solution $u$ whenever $\phi\in L^2$ belongs to $C^\infty(\bar M, \Lambda^{0,1})$ and satisfies the compatibility condition $\bar\partial\phi=0$, then we may construct $L^2$ holomorphic functions.  The first step is to use the pseudoconvexity property of the boundary to construct a function $f\in L^2$, holomorphic in a neighborhood $U_x$ of $x$ in $\bar M$, that blows up just at $x$. The function $f$ is usually chosen to be the reciprocal of the {\it Levi polynomial}.

Next, we can take a smooth function $\chi$ with support in $U_x$ that is identically equal 1 close to $x$.  Extending $\chi f$ by zero on the rest of $M$, we obtain a function, which we also call $\chi f$, defined everywhere and smooth away from $x$.  Furthermore, $\bar\partial(\chi f)=(\bar\partial\chi)f = 0$ near $x$, so $\bar\partial\chi f$ can be extended smoothly to the boundary.  If we can now find a smooth solution $u$ to $\bar\partial u = \bar\partial\chi f$, then $\Phi=\chi f-u$ will be holomorphic and must be singular at $x$ since $u$ is smooth up to the boundary. In particular, $\Phi$ will be nontrivial. 

Let us describe the construction of solutions $u\in L^{2}(M)$ to $\bar\partial u=\phi$ with $\phi\in L^{2}(M,\Lambda^{0,1})$, $\bar\partial \phi =0$, noting that solutions will only be determined modulo the kernel of $\bar\partial$ (i.e. the square-integrable holomorphic functions). First, we pass to an analysis of a self-adjoint operator as follows. Since the Hilbert space adjoint $\bar \partial ^{*}$ of $\bar\partial$ satisfies $\overline{{\rm im}\bar\partial^*}=({\rm ker}\bar\partial)^{\perp}$,
we will look for $u$ of the form $u=\bar\partial^*v$ satisfying
\begin{equation}\label{boh}\bar\partial\bar\partial^*v =\phi.
\end{equation}
In order to eliminate the compatibility 
condition on $\phi$, let us add a term, $\bar\partial^{*}\bar\partial v$, to obtain a new operator and equation  
\begin{equation}\label{KL}
\square v := (\bar\partial\bar\partial^{*}+\bar\partial^{*}\bar\partial)v=\phi,
\end{equation}
\noindent
in which $\phi$ need not be assumed to satisfy $\bar\partial \phi=0$. By von Neumann's theorem, the $\dbar${\it -Neumann Laplacian} $\square$ is self-adjoint with its natural boundary conditions and, when $\bar\partial \phi =0$ is true, Eq.\ \eqref{KL} reduces to Eq.\ \eqref{boh}. This is shown by applying $\bar\partial$ to Eq.\ \eqref{KL}, which gives that $\bar\partial\bar\partial^{*}\bar\partial v =0$ and in turn implies $0=\langle\bar\partial\bar\partial^{*}\bar\partial v, \bar\partial v \rangle= \|\bar\partial^{*}\bar\partial v\|_{L^2(M)}^2$. Thus the new term in Eq.\ \eqref{KL} vanishes when the compatibility condition holds. 

The Laplacian is elliptic but its natural $\dbar$-Neumann boundary conditions are not. Still, it turns out that the gain at the boundary depends on the geometry of the boundary, and the best such situation is that which we assume, in which the boundary is {\it strongly pseudoconvex}. In this case, the operator gains one degree on the Sobolev scale in neighborhoods of $bM$ and so global estimates including both interior and boundary neighborhoods gain only one degree. 

Our method of solution of Eq.\ \eqref{KL} is to establish that $\square$ satisfies a generalized Fredholm property. Classically, this means that the spaces ${\rm ker}\square$ and ${\rm coker}\square$ are finite-dimensional, {\it i.e.}\ on a (closed) subspace of finite complex codimension in $L^2(M, \Lambda^{p,q})$, $q>0$, the Laplacian is a Hilbert space isomorphism. In the case of compact $\bar M$, we can use the Rellich lemma and the operator's Sobolev gain to obtain the finite-dimensionality of those spaces. This completely characterizes the solvability of $\square$.

\subsection{The $G$-Fredholm property of the $\dbar$-Neumann problem} In the case of a noncompact manifold, an operator $A$ may fail to be Fredholm regardless of its Sobolev gain. That is, the kernel and/or cokernel of $A$ may be infinite-dimensional and/or the image of $A$ may not be closed. Thus, our solvability theory in the $G$-manifold case will be worked in terms of a generalized Fredholm property valid for $G$-invariant subspaces of $L^2$. The corresponding dimension is obtained by replacing the ordinary meaning of the complex dimension by the value of von Neumann's $G$-trace $\tr_G$ which takes finite values on some closed, $G$-invariant subspaces of $L^2(M)$ which are infinite-dimensional over $\mathbb C$. For this dimension to be defined, we need that $M$'s symmetry group $G$ be a unimodular group, \cite{P}, and that the quotient $X=\overline M/G$ be a compact manifold.

Making appropriate choices of metric on $M$ and in the vector bundles over $M$ and using a Haar measure on $G$, we obtain Hilbert spaces of sections on which the $G$-action is unitary. This action allows us to define an trace ${\rm tr}_G$ in the algebra of operators commuting with the action of $G$. 

For the case in which $M=G$, the $G$-dimension has a simple definition. The algebra of operators $\mathcal L_G\subset\mathcal B(L^2(G))$ commuting with the right action of $G$ is a von Neumann algebra consisting of some left convolutions $\lambda_\kappa$ against distributions $\kappa$ on $G$. On this algebra there is a unique trace $\trg$
agreeing with
\[\trg(\lambda_\kappa^*\lambda_\kappa) = \int_G ds\, |\kappa(s)|^2, \]    
\noindent
whenever $\lambda_\kappa\in\mathcal B(L^2(G))$ and $\kappa\in L^2(G)$,  \cite[\S\S 5.1, 7.2]{P}. In order to measure invariant projections in $\mathcal B(L^2(M))^G$, one uses the tensor product ${\rm Tr}_G = \tr_G\otimes \tr_{L^2(M/G)}$. Restricting this trace to orthogonal projections $P_L$ onto $G$-invariant subspaces $L$ provides a dimension function $\dim_G$,
\[\dim_G(L) = {\rm Tr}_G(P_L).\]
With this idea of dimension, one generalizes the classical definition of Fredholm operator.
\begin{definition} A $G$-invariant operator $A:\mathcal H_1\to\mathcal H_2$ is said to be $G$-\emph{Fredholm} if $\dim_G\ker A<\infty$ and if there exists a closed, invariant subspace $Q\subset {\rm im}(A)$ so that $\dim_G(\mathcal H_2\ominus Q)<\infty$.\end{definition}
The principal result of \cite{P1} is the following: 
\begin{theorem}\label{citfromp1} Let $M$ be a complex manifold with boundary which is strongly pseudoconvex. Let $G$ be a unimodular Lie group acting freely and properly by holomorphic transformations on $M$ so that $M/G$ is compact. Then, for $q>0$, the $\dbar$-Neumann Laplacian $\square$ in $L^2(M,\Lambda^{p,q})$ is $G$-Fredholm.\end{theorem}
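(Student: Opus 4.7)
The plan is to run the classical Morrey--Kohn--Hörmander proof of Fredholmness for the $\dbar$-Neumann Laplacian on compact strongly pseudoconvex manifolds in a $G$-equivariant form, replacing ordinary dimensions by $\dim_G$ and the Rellich compactness lemma by its $G$-trace analogue due to Atiyah. Since the complex structure, a $G$-invariant hermitian metric on $M$ (obtained by averaging a metric on a fundamental domain), the induced inner products on $\Lambda^{p,q}$, and hence $\dbar$, $\dbar^*$, $Q(\phi,\psi)=\langle\dbar\phi,\dbar\psi\rangle+\langle\dbar^*\phi,\dbar^*\psi\rangle$, and $\square$ are all $G$-invariant, the resolvent $(\square+1)^{-1}$ lies in $\mathcal B(L^2)^G$, the von Neumann algebra on which $\Trg$ is defined.

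The first step is the basic estimate. At each boundary point $p\in bM$, strong pseudoconvexity together with the assumption $q>0$ yield, via the Morrey--Kohn--Hörmander pointwise identity, a local inequality of the form $\|\phi\|^2_{1/2}\lesssim Q(\phi,\phi)+\|\phi\|^2$ for smooth forms $\phi\in\dom(\dbar^*)$ supported in a boundary coordinate patch; interior patches are handled by standard Gårding. Because $\overline M/G$ is compact, finitely many such patches cover a fundamental domain and their $G$-translates cover $\overline M$. Averaging a subordinate partition of unity over $G$ using the biinvariant Haar measure (available thanks to unimodularity) produces a $G$-invariant partition on $M$, and the local estimates sum to a global basic estimate on $L^2(M,\Lambda^{p,q})$. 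Kohn's regularization theory then promotes this to a one-degree Sobolev gain, so $(\square+1)^{-1}:L^2\to H^1$ is a bounded $G$-equivariant operator.

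Cocompactness now forces this gain to be $G$-compact: by Atiyah's $G$-equivariant version of Rellich's lemma, the inclusion $H^1(M,\Lambda^{p,q})\hookrightarrow L^2(M,\Lambda^{p,q})$ has the property that all spectral projections of $(\square+1)^{-1}$ away from $0$ are of finite $\Trg$. Equivalently, $(\square+1)^{-1}$ is $G$-compact in the semifinite algebra $(\mathcal B(L^2)^G,\Trg)$. Breuer--Fredholm spectral theory then gives $\dim_G E_{[0,\epsilon]}(\square)<\infty$ for every $\epsilon>0$; in particular $\dim_G\ker\square<\infty$, and on $(\ker\square)^\perp$ the operator $\square$ is bounded below, hence has closed range with $\Trg$-finite codimensional cokernel. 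This is precisely the $G$-Fredholm property for $q>0$.

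The main obstacle is the subelliptic boundary regularity. The $\dbar$-Neumann boundary conditions are not elliptic, and Kohn's degree gain is obtained through a delicate microlocal decomposition near $bM$ using carefully chosen cutoffs. To carry this argument through $G$-equivariantly one must check that the required cutoffs can be averaged over $G$ without destroying the underlying subelliptic estimates, and that the resulting global Sobolev norms are comparable to the ones used in the $\Trg$-calculus. This is where unimodularity, freeness, and proper cocompactness of the $G$-action are all used in an essential way, since together they allow one to transfer Kohn's local machinery to the noncompact $G$-manifold $M$ while keeping all ingredients compatible with the Haar measure used to define $\Trg$.
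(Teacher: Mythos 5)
You should note first that the paper does not prove this statement at all: Theorem \ref{citfromp1} is imported verbatim as the principal result of \cite{P1}, so there is no in-paper proof to compare against. Your outline does, however, follow essentially the same route as the actual proof in \cite{P1}: a $G$-invariant metric and the Morrey--Kohn--H\"ormander basic estimate for $q>0$ on a strongly pseudoconvex boundary, made uniform over $M$ by invariance and cocompactness; the resulting Sobolev gain for the $\dbar$-Neumann operator; and then the von Neumann--trace argument showing that the spectral subspaces $E_{[0,\epsilon]}(\square)L^2$ have finite $G$-dimension, which yields the $G$-Fredholm property.

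Two points deserve correction. First, the step you attribute to ``Atiyah's $G$-equivariant Rellich lemma'' is the real analytic content here and cannot simply be quoted from \cite{A}, which treats discrete coverings: what is needed is the statement that a closed $G$-invariant subspace of $L^2(M,\Lambda^{p,q})$ on which the $H^s$-norm ($s>0$) is controlled by the $L^2$-norm has finite $\Trg$-dimension, for a general unimodular Lie group acting freely, properly and cocompactly; establishing this with the trace $\Trg=\trg\otimes\tr_{L^2(M/G)}$ is part of the content of \cite{P1} (building on \cite{GHS}), not a black box. Second, your closing sentence overreaches: finiteness of $\dim_G E_{[0,\epsilon]}(\square)$ does \emph{not} imply that $\square$ is bounded below on $(\ker\square)^\perp$, nor that $\im\square$ is closed --- the spectrum may accumulate at $0$ on an infinite-dimensional (though finite-$G$-dimensional) subspace, and the definition of $G$-Fredholm used in the paper is designed precisely to avoid requiring closed range. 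The correct conclusion is obtained by taking $Q=E_{(\epsilon,\infty)}(\square)L^2$, on which $\square$ is invertible, so that $Q\subset\im\square$ is closed and invariant while $L^2\ominus Q=E_{[0,\epsilon]}(\square)L^2$ has finite $G$-dimension; together with $\dim_G\ker\square<\infty$ this is exactly the $G$-Fredholm property. With these two repairs your argument matches the cited proof.
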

In other words, as the Laplacian $G$-Fredholm, it is an isomorphism in the orthogonal complement of a closed, invariant subspace of finite $G$-dimension.
\begin{rem}Examples of manifolds satisfying the hypotheses of the theorem are the gauged $G$-complexifications of \cite{HHK} for unimodular Lie groups. The unimodularity of $G$ is necessary for the definition of the $G$-Fredholm property, \cite{P}.\end{rem}

\subsection{The Levi problem on $G$-bundles} The $G$-Fredholm property established earlier provides that the image of the Laplacian nontrivially intersects any closed, invariant subspace of $L^2(M)$ of large enough dimension. On the other hand, the Paley-Wiener theorem of \cite{AL} provides that if a closed, invariant subspace of $L^2(M)$ contains an element with compact support, then it is infinite-$G$-dimensional. Thus, if $\langle u\rangle$ denotes the closure of the complex vector space generated by translates of $u\in C^\infty_c$, we have $\im\square\cap\langle\dbar\chi f\rangle\neq\{0\}$, which is the basis of the construction of the Bergman space. To be more precise, we need subspaces in $\im\square\cap\langle\dbar\chi f\rangle\cap C^\infty(\bar M, \Lambda^{0,1})$ of arbitrarily large $G$-dimension in order to solve our problem. These are constructed in \cite{P2} as images under $\dbar$ of subspaces of $L^2$ generated by convolutions as follows,
\[\langle\langle\chi f \rangle\rangle_\delta = \{R_\Delta \chi f: \Delta\in\im P_\delta\},\]
where $(R_\Delta u)(p) = \int_G dt\, \Delta(t)u(pt)$, $R_\Delta\dbar u = \dbar R_\Delta u$, and $P_\delta$ is some invariant projection in $L^2(G)$. If the projections $P_\delta$ are chosen appropriately, the convolution kernels $\Delta\in\im P_\delta$ will be smooth and so elements $u\in\langle\langle\chi f \rangle\rangle_\delta$ will have $\dbar u\in C^\infty(\bar M,\Lambda^{0,1})$. If it happens that $u\notin C^\infty(\bar M)$, then Kohn's nontriviality argument can be applied with $u$ replacing $\chi f$. This motivates the introduction of the property called amenability in \cite{P2}:

\begin{definition}\label{amen} Let $G\to M\stackrel p \to X$ be a principal $G$-bundle and let $\xi:\bar X\to M$ be a piecewise continuous section so that $\xi|_{p({\rm supp}\chi)}$ is continuous. The action of $G$ on $M$ is called \emph{amenable} if there exist an $x\in bM$ and $\tau>0$ so that if $f$ is a Levi polynomial at $x$, then 1) $\chi f^{-\tau}\in L^2(M)$, 2) $\|\chi f^{-\tau}(\cdot,\xi)\|_{L^1(G)}<\infty$ for all $\xi\in p({\rm supp}\chi)$, and 3) for any nonzero $\Delta\in C^\infty(G)$, we have $R_\Delta\chi f^{-\tau}\notin C^\infty(\bar M)$.\end{definition}
\begin{rem} Let us point out that our nonstandard use of the term amenable refers to actions rather than groups intrinsically and is unrelated to the existence of an invariant mean on the group as in the property due to von Neumann. 

In \cite{P2} it is shown that conditions 1) and 2) hold for $0<2\tau<\dim G$. \end{rem}
The main result of \cite{P2} and the motivation for much of the present work is
\begin{theorem} Let $G$ and $M$ be as in Thm.\ \ref{citfromp1} and assume that the action of $G$ in $M$ is amenable. It follows that the Bergman space $L^2\mathcal O(M)$ is infinite-$G$-dimensional.\end{theorem}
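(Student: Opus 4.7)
The plan is to realize Kohn's construction of singular $L^2$ holomorphic functions in a $G$-equivariant form: amenability furnishes a family of non-smooth $L^2$ candidates parametrized by an invariant subspace of $L^2(G)$ of arbitrarily large $\trg$-trace, and Theorem~\ref{citfromp1} corrects these candidates into honest elements of the Bergman space without destroying the underlying $G$-dimension.

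First I would fix the boundary point $x$, the cutoff $\chi$, and the exponent $\tau>0$ supplied by amenability, so that $\chi f^{-\tau}\in L^2(M)$ and condition (3) of Definition~\ref{amen} holds. I would then select an invariant projection $P_\delta$ in the von Neumann algebra $\mathcal L_G$ whose image consists of smooth convolution kernels and whose trace $\trg(P_\delta)$ is as large as desired. For each $\Delta\in\im P_\delta$, write $u_\Delta=R_\Delta\chi f^{-\tau}\in L^2(M)$; condition (3) forces the assignment $\Delta\mapsto u_\Delta$ to be injective (the zero function is trivially smooth), so the closed, $G$-invariant subspace $\langle\langle\chi f^{-\tau}\rangle\rangle_\delta\subset L^2(M)$ has $G$-dimension $\trg(P_\delta)$, in line with the Paley--Wiener framework of \cite{AL} applicable because $\chi f^{-\tau}$ has compactly supported image in $M/G$.

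For the correction step, the identity $R_\Delta\dbar=\dbar R_\Delta$ together with the holomorphy of $f$ near $x$ give $\dbar u_\Delta=R_\Delta((\dbar\chi)f^{-\tau})\in C^\infty(\bar M,\Lambda^{0,1})$, supported away from the singularity and $\dbar$-closed. Applying Theorem~\ref{citfromp1} to $\square$ on $L^2(M,\Lambda^{0,1})$ produces a closed $G$-invariant $Q\subset\im\square$ of finite $G$-codimension; pulling back through the $G$-equivariant bounded map $\Delta\mapsto\dbar u_\Delta$ yields a closed $G$-invariant subspace $L_\delta\subset\im P_\delta$ of finite $G$-codimension on which $\square v_\Delta=\dbar u_\Delta$ is solvable. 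Taking $v_\Delta\perp\ker\square$ and $w_\Delta:=\dbar^* v_\Delta$, the integration-by-parts that reduces \eqref{KL} to \eqref{boh} gives $\dbar w_\Delta=\dbar u_\Delta$, and the standard boundary regularity for the $\dbar$-Neumann problem on a strongly pseudoconvex manifold promotes $\dbar u_\Delta\in C^\infty(\bar M,\Lambda^{0,1})$ to $w_\Delta\in C^\infty(\bar M)$.

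Finally, $\Phi_\Delta:=u_\Delta-w_\Delta$ lies in $L^2(M)\cap\ker\dbar=L^2\mathcal O(M)$, and the $G$-equivariant map $\Delta\mapsto\Phi_\Delta$ on $L_\delta$ has trivial kernel: $\Phi_\Delta=0$ would force $u_\Delta=w_\Delta\in C^\infty(\bar M)$, contradicting condition (3) of Definition~\ref{amen} for nonzero $\Delta$. Hence $\dim_G L^2\mathcal O(M)\geq\dim_G L_\delta\geq\trg(P_\delta)-C$ for a constant $C$ independent of $\delta$, and letting $\trg(P_\delta)\to\infty$ proves the theorem. The main obstacle will be the $G$-dimension bookkeeping: each ingredient (the Paley--Wiener identification on $L^2(G)$, the boundary regularity of $\square$, and the Fredholm correction to $L_\delta$) must be formulated at the level of closed $G$-invariant subspaces so that the final estimate compares honest von Neumann dimensions rather than merely complex dimensions, and one must verify that the smoothness of $w_\Delta$ up to $bM$ survives the finite-$G$-dimensional ambiguity in choosing $v_\Delta$.
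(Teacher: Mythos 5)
Your proposal is correct in outline and follows essentially the same route as the paper, which proves this theorem (in \cite{P2}, as sketched in the introduction here) by exactly this equivariant version of Kohn's argument: the convolution subspaces $\langle\langle\chi f\rangle\rangle_\delta$ of large $G$-dimension, non-smoothness from condition (3) of amenability, solvability modulo a finite-$G$-dimensional defect via the $G$-Fredholm property of $\square$ from Thm.\ \ref{citfromp1}, pseudolocal boundary regularity for the correction term, and the von Neumann dimension count through injective bounded equivariant maps. The caveats you flag (left/right convolution bookkeeping for the trace, regularity of $\dbar^*v_\Delta$ up to $bM$, and independence of the choice of $v_\Delta$) are precisely the technical points handled in \cite{P2}.
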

\begin{rem} In \cite{DSP} we apply this theorem to establish the nontriviality of the Bergman spaces of some natural $G$-manifolds; we describe these in Sect.\ \ref{HHKsds}.\end{rem}

\subsection{Integrals on the Heisenberg group}\label{Heisenbex}
Verifying in practice the condition described in Definition \ref{amen} leads, rather concretely, to the estimation of certain integrals with parameter performed on the group $G$. In this section we present an example that will serve as a model for the general situation of Theorem \ref{main}. Note that, since the integrals to estimate have a local character, a global assumptions like cocompactness of the action plays no role and in the following example it is in fact not satisfied.

Let $(z_0,z)$ be complex coordinates for $\mathbb C^{n+1}\cong \mathbb C\times \mathbb C^n$. The Siegel domain $D_{n+1}$ is defined as
\[D_{n+1} = \{(z_0,z)\in\mathbb C^{n+1}: \I z_0 > |z|^2\}\]
and the Levi polynomial of $bD_{n+1}$ at $0$ is given by $\Lambda(z_0, z) = z_0$. For $\epsilon>0$ sufficiently small, the curve $\lambda:[0,\epsilon)\to \mathbb C^{n+1}:s\mapsto (is,0)$ is a path to zero in  $D_{n+1}$. Note that this path is also normal in $\mathbb R^{2n+2}\cong\mathbb C^{n+1}$ to $T_0(bD_{n+1})$.
There is a convenient coordinatization of $bD_{n+1}$ in terms of the Heisenberg group in $n$ dimensions $H_n$ as follows. That boundary is modeled geometrically as the Lie group whose underlying manifold is $\mathbb R\times\mathbb C^n$ with coordinates $(t, \zeta_1,\zeta_2,\dots,\zeta_n)=(t, \zeta)$ and whose group law is given by
\[(t,\zeta)\cdot(t', \zeta') = (t+t'+2\,\I(\zeta\cdot\zeta'), \zeta+\zeta') \quad {\rm where} \quad \zeta\cdot \zeta'=\sum_1^n \zeta_j \bar\zeta_j'.\]
The group $H_n$ acts on $\mathbb C^{n+1}$ by holomorphic, affine transformations which preserve $D_{n+1}$ and $bD_{n+1}$ as follows: if $(t,\zeta)\in H_n$ and $z\in\mathbb C^{n+1}$, then define
\[(t, \zeta)\cdot z = \left(z_0 + t + i |\zeta|^2 + 2i\sum_1^n z_j \bar \zeta_j\ {\bf ,}\ z_1+\zeta_1,\dots, z_n+\zeta_n\right).\]
The action of a group element $(t,\zeta)\in H_n$ on the Levi polynomial $\Lambda(z)=z_0$ is then
\begin{equation}\label{action}((t, \zeta)^* \Lambda)(z) = z_0 + t + i |\zeta|^2 + 2i\sum_1^n z_j \bar \zeta_j \end{equation}
and, in particular for $(z_0,z)=(0,0)$, this expression reduces to
\begin{equation}\label{this}((t, \zeta)^* \Lambda)(0,0) = t + i |\zeta|^2. \end{equation}
Suppose that $bM$ as above coincides with $bD_{n+1}$. We will verify that the action of any subgroup $G\subset\{(0,\zeta)\}\subset H_n$ is amenable. The group $G$ consists of points $(0,\zeta)\in H_n$ with $\zeta$ belonging to a certain subset $\mathcal S\subset\mathbb C^n$ with the property $z,z'\in \mathcal S\Rightarrow z\cdot z'\in\mathbb R$, for example, $\mathcal S=\mathbb R^n\subset \mathbb C^n$.

Along the path $\lambda(s)$, we get
\begin{equation}\label{iop2}(R_\Delta \chi \Lambda^{-\tau})(\lambda(s)) =\int_{\mathcal S}d\zeta\ \frac{\Delta(0,\zeta)((0,\zeta)^* \chi(\lambda(s))}{\left[ i(s +|\zeta|^2)\right]^\tau}\end{equation}
\[\approx \int_{B}\frac{dz}{\left[ i(s +|\zeta|^2)\right]^\tau}\approx \int_0^\epsilon dr\  \frac{r^{d-1}}{\left[  i(s +r^2)\right]^\tau}.\]
Taking sufficiently many derivatives and putting $s=0$, the resulting integral is manifestly divergent, so from this we conclude that the convolution is not smooth to the boundary.

Note that we obtain a singular convolution from subgroups whose orbits satisfy the tangency condition assumed in Thm.\ \ref{main}. Our proof of the theorem depends on the fact that, as it turns out, the general case is a sufficiently small perturbation of this Heisenberg group case to preserve this divergent behavior.

\subsection{Unitary representations of Lie groups} Unitary representations of Lie groups in $L^2$-spaces of holomorphic functions have been studied intensely, and although the abstract theory of Lie group representations is highly developed, it has been long considered important to provide geometric realizations of these representations.  

The Borel-Weil theorem is an important example in which representations are realized as holomorphic functions on a space related to the group. Also, the Mackey program of construction of unitary representations of Lie groups and Harish-Chandra theory are connected to our setting, \cite{Kn}. 

As our present analytical techniques rely ultimately on the methods of the $L^2$-index theorem of Atiyah, it seems worthwhile to mention here that the first example application of that theorem in the original paper \cite{A} was in the construction of $L^2$-holomorphic representation spaces for $SL(2,\mathbb R)$ belonging to the discrete series. Though our method is a long-reaching development of this method, the initial content dates from the index theorem.

To our aesthetic, it seems most attractive to take the natural geometric, complex $G$-manifolds constructed in \cite{HHK} and investigate their Bergman spaces. Thus these manifolds will provide the starting points of our main class. 

\section{Integrals of the Levi polynomial over submanifolds}
 
 In this section we discuss the divergence of integrals analogous to (\ref{iop2}). The integrals in question will be performed over a submanifold $O\subset bM$, as well as over a $1$-parameter family of submanifolds approaching it. We think of $O$ as the orbit through $p\in bM$ of some $G$-action, but for the moment we do not make this assumption.  The validity of our approach depends on the fact that the divergence properties of these integrals are invariant under a smooth change of coordinates and of the measure, and as such, they are insensitive to the presence of a group structure on $O$. We will prove that under a certain order assumption on $O$, these divergence properties are the same as those of (\ref{iop2}). Later, we will  see that this assumption is automatically satisfied when $O$ is an orbit.
\subsection{Choice of coordinates and Levi polynomial}
As we will be interested in only the local picture, we may, without loss of generality, model our situation on a fixed, small neighborhood in $\mathbb C^{n+1}$ as follows.
\begin{definition}\label{adapt} Fix $p\in bM$. We will say that a system of local coordinates $(z_0=x_0+iy_0,z)$, where $z=(z_1=x_1+iy_1,\ldots,z_n=x_n+iy_n)$, is \emph{adapted to} $(bM,p)$ if $p\leftrightarrow (0,0)$ and $T_0(bM)$ is spanned by $\{\partial/\partial x_0, \partial/\partial x_j, \partial/\partial y_j\}_{1\leq j\leq n}$.
\end{definition}
\begin{rem}
Obviously, any set of coordinates can be brought to this form by a complex linear transformation. Moreover, if the coordinates are fixed as above, then the complex tangent space $T^c_0(bM) = T_0(bM)\cap iT_0(bM)$ is spanned by $\{\partial/\partial x_j, \partial/\partial y_j\}_{1\leq j\leq n}$.
\end{rem}
If such coordinates are fixed, we select uniquely an equation for $bM$ by solving for $\I z_0$:
\begin{equation}\label{solvingfor}
bM =\{(z_0,z)\in \mathbb C^{n+1}:\I z_0 = f(\R z_0, z)\}
\end{equation}
where $f(\R z_0,z)=O(2)$ is a smooth, real-valued function. We express the second-order Taylor expansion $f_2$ of $f$ in the following way,
\begin{equation}\label{secord} 
f_2(\R z_0, z) = \R z_0\cdot \ell(\R z_0,z) + 2\R P(z) + L(z,\overline z),
\end{equation}
where $\ell$ is a real-valued, linear function in $(\R z_0,z)$, while $P$ and $L$ correspond to the parts (involving only $z$) of type, respectively, $(2,0)$ and $(1,1)$ of the second-order expansion. The notation $L(z,\overline z)$ is used to emphasize the fact that $L$ is a real-valued polynomial of degree 2 in $z$. Since $bM$ is strongly pseudoconvex, $L(z,\overline z)$, the restriction of the Levi form to $T^c_0(bM)$, is positive and Hermitian. We define an {\it adapted Levi polynomial} $\Lambda(z_0,z)$ by
\[\Lambda(z_0,z) = z_0 - 2iP(z).\]
Note that our definition does not coincide with the standard definition of the Levi polynomial associated to the defining function $\rho(z_0,z)=\I z_0 - f(\R z_0,z)$ as in \cite{K}, for example. Nevertheless, we can show that $\Lambda$ is still a support function for $bM$ at $0$.
\begin{lemma} With $\Lambda(z_0,z)$ as defined above, for a small enough neighborhood $U$ of $0$ the following hold:
\begin{itemize}
\item $\{\Lambda = 0\}\cap U \cap bM = \{0\}$,
\item the restriction of $\Lambda$ to $M\cap U$ takes values in $\mathbb C\setminus \{w\in \mathbb C: \R w=0, \I w\leq 0 \}$.
\end{itemize}
In particular, a branch of $\log \Lambda$ is well-defined on $M\cap U$. \end{lemma}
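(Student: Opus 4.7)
The plan is to work out $\R\Lambda$ and $\I\Lambda$ explicitly in the adapted coordinates, and then exploit the positive definiteness of $L(z,\bar z)$ to dominate the cubic error terms. Write $P(z) = A(z) + iB(z)$ with $A,B$ real polynomials of degree $2$ in $z$; then
\[\Lambda(z_0,z) = (\R z_0 + 2B(z)) + i(\I z_0 - 2A(z)) = (\R z_0 + 2B(z)) + i(\I z_0 - 2\R P(z)).\]
Since $P$ has no constant or linear part, $B(z) = O(|z|^2)$ and $A(z) = O(|z|^2)$. Combine this with the Taylor expansion \eqref{secord}, which yields
\[f(\R z_0, z) - 2\R P(z) \;=\; \R z_0\cdot\ell(\R z_0, z) \;+\; L(z,\bar z) \;+\; O\bigl((|\R z_0|+|z|)^3\bigr).\]

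For the first bullet, suppose $(z_0,z)\in bM\cap U$ satisfies $\Lambda(z_0,z) = 0$. From $\R\Lambda = 0$ we get $\R z_0 = -2B(z) = O(|z|^2)$, and substituting into the expression above gives $\R z_0\cdot\ell(\R z_0,z) = O(|z|^3)$ and the remainder term becomes $O(|z|^3)$ as well. Using $\I z_0 = f(\R z_0,z)$ on $bM$,
\[0 = \I\Lambda = f(\R z_0,z) - 2\R P(z) = L(z,\bar z) + O(|z|^3).\]
By strong pseudoconvexity, $L(z,\bar z) \ge c|z|^2$ for some $c>0$, so for $|z|$ small the right-hand side is at least $\tfrac{c}{2}|z|^2$; this forces $z=0$, whence $\R z_0 = -2B(0) = 0$ and $\I z_0 = f(0,0) = 0$, i.e.\ the point is $0$.

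For the second bullet, we need only show that whenever $(z_0,z)\in M\cap U$ satisfies $\R\Lambda=0$, one has $\I\Lambda>0$. On $M$ the defining inequality is $\I z_0 > f(\R z_0,z)$, so
\[\I\Lambda \;=\; \I z_0 - 2\R P(z) \;>\; f(\R z_0,z) - 2\R P(z).\]
The condition $\R\Lambda=0$ again gives $\R z_0 = -2B(z) = O(|z|^2)$, so exactly as before the right-hand side equals $L(z,\bar z) + O(|z|^3)$, which is $\ge\tfrac{c}{2}|z|^2 \ge 0$ for $|z|$ small; thus $\I\Lambda>0$ whenever $z\neq 0$. In the remaining case $z=0$ and $\R\Lambda=0$, we have $\R z_0 = 0$ and $P(0) = 0$, so $\Lambda = i\I z_0$ with $\I z_0>0$ because $(z_0,0)\in M$. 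In every case $\Lambda\notin\{\R w = 0, \I w\le 0\}$.

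The last sentence is then immediate: the image of $M\cap U$ under $\Lambda$ lies in the slit plane $\mathbb C\setminus\{\R w=0,\ \I w\le 0\}$, which is simply connected and omits $0$, so any branch of $\log$ chosen on this slit plane pulls back to a well-defined holomorphic $\log\Lambda$ on $M\cap U$. The main obstacle is really just the careful bookkeeping that $\R\Lambda=0$ promotes $\R z_0$ to second order in $|z|$, which is what permits the positive quadratic form $L$ to dominate all remaining terms; once this is noted, both claims fall out of the same computation.
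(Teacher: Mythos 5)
Your proof is correct and follows essentially the same route as the paper: compute $\R\Lambda=\R z_0+2\I P(z)$ and $\I\Lambda=\I z_0-2\R P(z)$, use $\R\Lambda=0$ to make $\R z_0=O(|z|^2)$ so that the $\R z_0\cdot\ell$ and remainder terms are $O(|z|^3)$, and let the positive definite Levi form $L(z,\bar z)$ dominate. Your explicit treatment of the $z=0$ case in the second bullet is a minor refinement of the paper's argument, which asserts positivity of the right-hand side slightly too quickly there, but the substance is identical.
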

\begin{proof} By definition, we have $\R \Lambda(z_0,z)= \R z_0 + 2\I P(z)$ and $\I \Lambda(z_0,z) = \I z_0 - 2\R P(z)$. If $q=(z_0^q,z^q)$ is such that $\Lambda(q)=0$ and $q\in bM$, we have that
\[2\R P (z^q) = \I z_0^q = 2\I P(z^q)\, \ell(2\I P(z^q),z^q) + 2\R P(z^q) + L(z^q,\overline z^q) + O(3), \]
from which we get
\[L(z^q,\overline z^q) + O(3)=0.\]
If $q$ lies in a sufficiently small neighborhood $U$ of $0$, the condition above implies that $z^q=0$, hence $z_0^q=0$ and $q=0$, which proves the first point. Let $q\in M$ be such that $\R\Lambda(q)=0$. We obtain that
\[\I z_0^q > 2\I P(z^q)\, \ell(2\I P(z^q),z^q) + 2\R P(z^q) + L(z^q,\overline z^q) + O(3),\]
which implies
\[\I \Lambda(q) = \I z_0^q - 2\R P(z^q) > L(z^q,\overline z^q) + O(3).\]
If $q$ belongs to $U$, the expression on the right-hand side is positive, thus the second point is proved.\end{proof}
\subsection{An order 3 vanishing condition}  Let $O\subset bM$ be a real submanifold of dimension $2n$ with $0\in O$.  We will show in Sect.\ \ref{gendim} how to extend our arguments to manifolds of smaller dimensions. We are now in a position to describe our main assumption.
\begin{definition} If the restriction of $\R \Lambda$ to $O$ vanishes to order at least $3$ at $0$ we will say that the \emph{order 3 condition} is satisfied.
\end{definition}
\begin{rem} Denote by $\pi$ the projection of $\mathbb C^{n+1}$ onto $T_0(bM)$, and let $\Sigma=\{\Lambda=0\}$. The order 3 condition is then equivalent to the property that $\pi(O)$ and $\pi(\Sigma)$ have order of contact at least two at zero.
\end{rem}
We now check that our hypothesis is invariantly defined. 
\begin{lemma} \label{adapt}
The order 3 condition does not depend on the choice of (adapted) local coordinates about $p$.
\end{lemma}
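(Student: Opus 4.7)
The plan is to compare the Levi polynomial $\Lambda$ in the original adapted coordinates with the pullback $\tilde\Lambda := \Lambda'\circ \Phi$, where $\Phi$ is the biholomorphism relating the two adapted coordinate systems at $p$. First I would pin down the form of $\Phi$. Since $\Phi(0) = 0$ and $d\Phi|_0$ must send $T_0(bM) = \{y_0 = 0\}$ to $\{y_0' = 0\}$, a direct check on the linear part (requiring $\I z_0'$ to vanish on $\{y_0 = 0\}$) forces
\[
\Phi_0(z_0,z) = \alpha z_0 + R(z_0,z) + O(3), \qquad \Phi_j(z_0,z) = a_j z_0 + \sum_k c_{jk} z_k + O(2) \quad (j\geq 1),
\]
with $\alpha \in \mathbb{R}\setminus\{0\}$, holomorphic homogeneous quadratic $R$, and constants $a_j, c_{jk}\in \mathbb{C}$. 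Because $P'$ is homogeneous quadratic, $O(2)$ perturbations of its argument contribute only $O(3)$, so
\[
\tilde\Lambda(z_0,z) = \alpha z_0 + R(z_0,z) - 2iP'(a z_0 + c z) + O(3),
\]
and the difference $S := \tilde\Lambda - \alpha \Lambda = R(z_0,z) - 2iP'(az_0 + cz) + 2i\alpha P(z)$ is, modulo $O(3)$, a homogeneous quadratic holomorphic polynomial in $(z_0, z)$.

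The second step is to use that $bM$ is the same set in both systems. Parameterizing $bM$ locally as $(x_0,z)\mapsto (x_0 + if(x_0,z),z)$, pushing this into new coordinates modulo $O(3)$, and imposing $\I z_0' = f'(\R z_0',z')$ yields the real polynomial identity
\[
\alpha f_2(x_0,z) + \I R(x_0,z) = f'_2(\alpha x_0, a x_0 + c z) \pmod{O(3)}.
\]
Specializing to $x_0 = 0$ and decomposing both sides by bidegree into pure $(2,0)$, Hermitian $(1,1)$, and pure $(0,2)$ parts, the $(2,0)$ piece of $\I R(0,z) = (R(0,z) - \overline{R(0,z)})/(2i)$ equals $-\tfrac{i}{2}R(0,z)$; matching $(2,0)$ parts therefore gives
\[
\alpha P(z) - \tfrac{i}{2} R(0,z) = P'(c z),
\]
which is precisely the statement $S(0,z) = 0$. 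Since $S$ is homogeneous quadratic in $(z_0,z)$, every surviving monomial carries a factor of $z_0$, i.e.\ $S(z_0,z) = z_0\cdot T(z_0,z)$ for some polynomial $T$ of degree $\leq 1$.

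To close the argument I would use the geometry of $O$. The $1$-jet of $\R\Lambda|_O$ at $0$ equals $dx_0|_{T_0 O}$, and by the same analysis applied to $\R\tilde\Lambda|_O$ (whose $1$-jet is $\alpha\, dx_0|_{T_0 O}$), the vanishing of this $1$-jet is already coordinate-invariant. If it fails, the order $3$ condition fails in both systems; otherwise $T_0 O \subset T_0^c(bM) = \{z_0 = 0\}$, and by dimension $T_0 O = T_0^c(bM)$, so one may parameterize $O$ locally as $z\mapsto (z_0(z), z)$ with $z_0(z) = O(|z|^2)$. Substituting into the factorization $S = z_0\cdot T$ yields $S|_O = O(|z|^2)\cdot O(|z|) = O(|z|^3)$, whence
\[
\R\tilde\Lambda|_O - \alpha\, \R\Lambda|_O = \R S|_O + O(|z|^3) = O(|z|^3),
\]
so the $2$-jets of $\R\Lambda|_O$ and $\R\tilde\Lambda|_O$ at $0$ are proportional by the nonzero real $\alpha$ and vanish simultaneously. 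The main obstacle is deriving $S(0,z) = 0$: the compatibility identity is real-valued and directly constrains only $\I R(0,z)$, so one has to sort it carefully by bidegree and exploit the $(2,0)$ matching to recover the holomorphic polynomial $R(0,z)$ itself.
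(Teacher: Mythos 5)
Your proposal is correct, and its computational core coincides with the paper's: the identity you extract by matching second-order jets of the defining equation under the adapted change of coordinates and isolating the $(2,0)$ part, namely $\alpha P(z)-\tfrac{i}{2}R(0,z)=P'(cz)$, is exactly the paper's relation $P'(z')=iE(z')/2+P(\beta(z'))$ (written for the inverse change and with the scaling normalized away). Where you diverge is the concluding step. The paper transforms the graph equations of $O$ itself into the new coordinates, computes $h'(z')=-\R E(z')-2\I P(\beta(z'))+O(3)$ using the order $3$ condition in the old coordinates, and checks $\R\Lambda'|_O=O(3)$ directly. You instead package everything into $S=\Lambda'\circ\Phi-\alpha\Lambda$, observe that the jet-matching identity says precisely $S(0,z)=0$, hence $S=z_0\,T$ with $T$ homogeneous linear (this homogeneity, i.e.\ $T(0,0)=0$, is what makes $S|_O=O(|z|^2)\cdot O(|z|)$ work, so state it as degree exactly $1$ rather than $\leq 1$), and use only the tangency $T_0O\subset\{z_0=0\}$ to conclude that the $2$-jets of $\R\Lambda|_O$ and $\R(\Lambda'\circ\Phi)|_O$ are proportional by $\alpha\neq 0$. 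This buys you two things the paper's argument does not make explicit: you never have to rewrite the defining equations of $O$ in the new coordinates, and you get the genuine two-way equivalence, including the degenerate case where the $1$-jet $dx_0|_{T_0O}$ is nonzero and the condition fails in both systems (the paper's computation tacitly assumes the condition holds in the original coordinates in order to write $h=-2\I P+O(3)$). As a bonus, your argument does not actually need $\dim O=2n$ (tangency to $\{z_0=0\}$ suffices for $z_0|_O=O(2)$), so it subsumes Remark 2.7 on lower-dimensional $O$ as well.
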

\begin{proof} Let $(z_0',z')$ be another set of adapted local coordinates around $p\leftrightarrow 0$. Since its differential at $0$ must preserve $T_0(bM)$, the map giving the change of coordinates between $(z_0,z)$ and $(z_0',z')$ can be expressed up to a real scaling factor as
\[z_0 = z_0' + F(z_0',z'),\ z = G(z_0',z')\]
where $F(z_0',z')=O(2)$ is a holomorphic function and $G(z_0',z'):\mathbb C^{n+1}\to \mathbb C^n$ is a holomorphic mapping whose differential with respect to the variables $z'$ has non-vanishing determinant at $0$. We denote by $F_2$ the second order expansion of $F$:
\[F_2(z_0',z') = z_0' \kappa(z_0',z') + E(z')\]
where $\kappa$ is a complex-linear function and $E$ is a homogeneous polynomial of degree $2$ in the variables $z'$. We also write $G=G_1 + O(2)$, where $G_1(z_0',z') = \alpha(z_0')+\beta(z')$ is again a complex linear map.

Let
\[\{\R z_0 = h(z), \I z_0 = f(h(z),z)\}\]
be a set of equations for the orbit $O$ around $0$ in the coordinates $(z_0,z)$. The order 3 condition gives that $h(z) = -2\I P(z) + O(3)$ and in particular, $O$ must be tangent to $T^c_0(bM)$. In the coordinates $(z_0',z')$, the hypersurface $bM$ is locally defined by
\begin{equation} \label{eqbM}
\I z_0' + \I F(z_0',z') = f(\R z_0' + \R F(z_0',z'),G(z_0',z'))
\end{equation}
and the orbit $O$ by
\[\{\R z_0'+ \R F(z_0',z') = h(G(z_0',z')), \I z_0' + \I F(z_0',z') = f(h(G(z_0',z')),G(z_0',z'))\}.\]
Let $f'(z_0',z')$ be uniquely defined in such a way that $\{\I z_0' = f'(z_0',z')\}$
is a local defining equation for $bM$. To verify the order 3 condition in the new coordinates, we compute the second-order Taylor expansion $f_2'$ of $f'$,
\[f_2'(\R z_0', z') = \R z_0' \ell'(\R z_0',z') + 2\R P'(z') + L'(z',\overline z').\]
To this end, we examine the second-order jet of (\ref{eqbM}). Obviously, we need only to consider $f_2$, $F_2$ and $G_1$:
\[\I z_0' = -\I F_2(z_0',z') + f_2(\R z_0', G_1(z_0',z')) + O(3). \]
We expand the right-hand side of the previous expression in a polynomial in $\R z_0'$, $\I z_0'$ and $z'$. Any monomial of this expansion which is of the form $Q\I z_0' $ with $Q=O(1)$  can be replaced, using (\ref{eqbM}), by $Q(-\I F(z_0',z') + f(\R z_0' + \R F(z_0',z'),G(z_0',z')))$, which is $O(3)$ and thus can be ignored. Performing the computation we obtain
\[\I z_0' = \R z_0' \ell'(\R z_0',z') - \I E(z') +2\R P(\beta(z')) + L(\beta(z'),\overline{\beta(z')}) + O(3)\]
for a suitable linear form $\ell'$, which gives in particular
\[P'(z') = iE(z')/2 + P(\beta(z')).\]
It follows that the Levi polynomial $\Lambda'(z_0',z')$ has the expression
\[\Lambda'(z_0',z')= z_0' + E(z') - 2i P(\beta(z')).\]
Now, we turn to the second-order jet of the defining equations for $O$ in the new coordinates:
\begin{align*}\R z_0' & = - \R F_2(z_0',z') + h_2(G_1(z_0',z'))\\
\I z_0' &= -\I F_2(z_0',z') + f_2(0,G_1(z_0',z')).\end{align*}
Combining these we obtain
\[\R z_0' = h'(z') =  -\R E(z') -2\I P(\beta(z')) + O(3)\]
which implies that the order 3 condition holds in the new coordinates $(z_0',z')$.\end{proof}
\begin{rem}\label{alsolower} The previous lemma also holds for a manifold $O'$ of dimension lower than $2n$. In fact, $\pi(O')$ has order of contact at least $2$ with $\pi(\Sigma)$ if and only if it is contained in a $2n$-dimensional manifold $O$ with the same property. 
\end{rem}
\subsection{Estimation of the integrals}\label{estim} By results of \cite[\S18]{FS}, about a point $p\in bM$ there exists a neighborhood $U_p\subset\widetilde{M}$ and local complex coordinates $Z=(z_0, z)\in\mathbb C\times\mathbb C^n$ such that
\begin{enumerate}
\item $p\leftrightarrow 0\in\mathbb C^{n+1}$
\item $bM =\{(z_0,z)\in\mathbb C^{n+1} : \I z_0 = |z|^2 + O(|z_0||z| + |z|^3)\}$
\end{enumerate}
\begin{rem} Condition (2) means that the hypersurface $bM$ osculates the boundary of the Siegel domain
\[D_{n+1} = \{(z_0,z)\in\mathbb C^{n+1}: \I z_0 > |z|^2\}\]
to first order at zero, and to second order along the variables $z$. This means that the surfaces $bM\cap\{z_0=0\}$ and $bD_{n+1}\cap\{z_0=0\}$ osculate to order 2. \end{rem}
In such coordinates, the Levi polynomial of $bM$ at $0$ is given by
$\Lambda(Z) = z_0$, moreover, for $\epsilon>0$ sufficiently small, the curve $\lambda:[0,\epsilon):M\to\mathbb C^{n+1}:s\mapsto (is,0)$ is a path to zero in $M$, normal in $\mathbb R^{2n+2}\cong\mathbb C^{n+1}$ to $T_0(bM)$.

We now perform an analysis on $bM$ analogous to the one in Sect.\ \ref{Heisenbex} for $bD_{n+1}\cong H_n$. The tangent space at the origin of $bM$ at 0 is given by $T_0(bM)=\{\I z_0=0\}$; denote by $\pi:\mathbb C^{n+1}\to T_0(bM)$ the orthogonal projection.  We can then express $\pi(O)$ as
\[\{(z_0, z) : \R z_0 = h(z), \I z_0=0\}\]
where $h$ is a smooth, real-valued function defined in a neighborhood of $0$ in $(0,z)$ such that $h(0)=0$ and, because of the order 3 condition, $h(z)=O(|z|^3)$. Note that, in particular, the tangent space of $\pi(O)$ at $0$ is  $\{z_0=0\}=T^c_0(bM)$.

Alternatively, we can consider a smooth parametrization $\Gamma:\mathbb C^n\to \pi(O)\subset \mathbb R\times \mathbb C^n\cong T_0(bM)$ taking the form
\[\mathbb C^n \ni \zeta \to (h(\zeta),\zeta) = (O(|\zeta|^3),\zeta).\]
Since $bM$ is given as the zero set of $\I z_0 - f$ with $f$ as in (\ref{solvingfor}), and since $O\subset bM$, it follows that the map
\[\mathbb C^n\ni \zeta\to P(\zeta):=(h(\zeta),f(h(\zeta),\zeta),\zeta) \in \mathbb C^{n+1}(\R z_0, \I z_0, z) \]
gives a smooth parametrization of $O$. 

We are interested in the following integral, keeping the notation $R_\Delta \chi \Lambda^{-\tau}$ only in analogy to (\ref{iop2}):
\[(R_\Delta \chi \Lambda^{-\tau})(0) =\int_{\mathbb C^n}d\zeta\ \frac{\Delta(P(\zeta))\chi(P(\zeta))}{\left[ \Lambda(P(\zeta))\right]^\tau}
\approx \int_{B}\frac{d\zeta}{\left[ h(\zeta)+i(f(h(\zeta),\zeta))\right]^\tau} =\]
\begin{equation}\label{thos} = \int_{B}\frac{d\zeta}{\left[ h(\zeta)+i(|\zeta|^2 + O(|\zeta|^3))\right]^\tau}, \end{equation}
where we use the notation $\approx$ to express that the quotient of the two integrands is a smooth function not vanishing at $0$. The last equality follows from the fact that $f(\R z_0,z) = |z|^2 + O(|\R z_0||z| + |z|^3)$, and that along the parametrization we have $\R z_0(P(\zeta)) = h(\zeta) = O(|\zeta|^3)$ and $z(P(\zeta)) = \zeta = O(|\zeta|)$. This integral is then a perturbation of the one computed for the Heisenberg group at $s=0$, obtained by adding the \lq\lq high order terms\rq\rq\ $h(\zeta)$ and $O(|\zeta|^3)$. Collecting $|\zeta|^2$ in the denominator of \eqref{thos},
\[\int_{B}\frac{d\zeta}{|\zeta|^{2\tau} \left(\frac{h(\zeta)}{|\zeta|^2}+ i + O(|\zeta|)\right)^\tau},\]
we have that $h(\zeta)/|\zeta|^2\to 0$ for $\zeta\to 0$. This is enough to prove that the integral diverges for large $\tau$, but, as in the case of the Heisenberg group, we actually need to look at the behavior of this integral along a one-parameter family of submanifolds rather than just along $O$.

Let us now, then, consider the path in $\mathbb C^{n+1}$ given by $\lambda(s) = (is,0)$, where $s\in \mathbb R^+$, and an arbitrary, smooth $1$-parameter family of submanifolds $O_s$ such that $\lambda(s)\in O_s$ and $O_0=O$.  We choose a map $\Gamma: \mathbb R^+ \times \mathbb C^n\to \mathbb C^{n+1}$ with the following properties:
\begin{itemize}
\item $\Gamma(s,0) = \lambda(s)$
\item for each fixed $s\in \mathbb R^+$, the map $\Gamma(s,\cdot)$ parametrizes $O_s$;
\item for each fixed $s\in \mathbb R^+$, the map $\Gamma(s,z)$ is of the form $(A(s,z)+iB(s,z),z)$.
\end{itemize}
From these properties we deduce the following expression for $\Gamma(s,z) = (A(s,z)+iB(s,z),z)$:
\begin{align*}A(s,z) &= h(z) + s(z\cdot a(s,z)), \\
B(s,z) &= f(h(z),z) + s(1+z\cdot b(s,z)) \end{align*}
for some smooth maps $a,b: \mathbb R^+ \times \mathbb C^n \to \mathbb C^n$. We remark that here we employ the notation $v\cdot w$ to denote the Euclidean scalar product of two vectors $v,w\in \mathbb C^n$.  In the following, we will replace our Levi polynomial with the more convenient $\Lambda = -iz_0$. With this new definition for $\Lambda$ we obtain the integral
\[(R_\Delta \chi \Lambda^{-\tau})(\lambda(s)) \approx \int_{B}\frac{d\zeta}{\left[ -i (h(\zeta) + s(\zeta\cdot a(s,\zeta))) + (f(h(\zeta),\zeta) + s (1 + \zeta \cdot b(s,\zeta)))\right]^\tau}, \]
which we rewrite after collecting $(s+|\zeta|^2)^\tau$ as
\[ (R_\Delta \chi \Lambda^{-\tau})(\lambda(s)) \approx   \int_{B} \frac{d\zeta}{(s+|\zeta|^2)^\tau}\frac{1}{\left[\frac{f(h(\zeta),\zeta) + s + s \zeta\cdot b(s,\zeta)}{s+|\zeta|^2} - i \frac{h(\zeta) + s\zeta \cdot a(s,\zeta)}{s+|\zeta|^2}\right]^\tau} =\]
\[= \int_{B} \frac{d\zeta}{(s+|\zeta|^2)^\tau}\frac{\cos\left(\tau\arctan\left(\theta(s,\zeta)\right)\right) + i\sin\left(\tau\arctan\left(\theta(s,\zeta)\right)\right)}{\left[\left(\frac{f(h(\zeta),\zeta) + s + s \zeta\cdot b(s,\zeta)}{s+|\zeta|^2}\right)^2 + \left(\frac{h(\zeta) + s\zeta \cdot a(s,\zeta)}{s+|\zeta|^2}\right)^2\right]^{\tau/2} } \]
where we set
\[\theta(s,\zeta) = -\frac{h(\zeta) + s\zeta \cdot a(s,\zeta)}{f(h(\zeta),\zeta) + s + s \zeta\cdot b(s,\zeta)}. \]
The key observation is the following
\begin{lemma}\label{goto0}
We have $\theta(s,\zeta)\to 0$ as $(s,\zeta)\to 0$.
\end{lemma}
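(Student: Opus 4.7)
The plan is to bound the numerator of $\theta(s,\zeta)$ above by a multiple of $|\zeta|^3 + s|\zeta|$ and the denominator below by a positive multiple of $|\zeta|^2 + s$, then split the resulting quotient into two pieces, each of which visibly tends to zero.

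First I would invoke the order 3 condition to get $h(\zeta)=O(|\zeta|^3)$, together with the Folland-Stein normal form $f(\R z_0, z) = |z|^2 + O(|z_0||z|+|z|^3)$, which combine to give $f(h(\zeta),\zeta) = |\zeta|^2 + O(|\zeta|^3)$. Smoothness of $a$ and $b$ yields $|a|,|b|\leq C$ on a small neighborhood of $(0,0)$, producing the numerator bound
\[|h(\zeta) + s\zeta\cdot a(s,\zeta)| \leq C'\bigl(|\zeta|^3 + s|\zeta|\bigr).\]
For the denominator one writes
\[f(h(\zeta),\zeta) + s + s\zeta\cdot b(s,\zeta) = (|\zeta|^2 + s) + O(|\zeta|^3) + O(s|\zeta|).\]
Each error is absorbable into the leading $|\zeta|^2 + s$ on a sufficiently small neighborhood (since $|\zeta|^3 \leq \epsilon|\zeta|^2$ and $s|\zeta|\leq \epsilon s$ whenever $|\zeta|\leq \epsilon$), so the denominator is bounded below by, say, $\tfrac{1}{2}(|\zeta|^2+s)$.

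Combining these two estimates,
\[|\theta(s,\zeta)| \leq \frac{2C'|\zeta|^3}{|\zeta|^2 + s} + \frac{2C' s|\zeta|}{|\zeta|^2 + s} \leq 2C'|\zeta| + C'\sqrt{s},\]
where for the second summand I use the elementary inequality $|\zeta|^2 + s \geq 2|\zeta|\sqrt{s}$ to obtain $s|\zeta|/(|\zeta|^2+s)\leq \sqrt{s}/2$. Both summands tend to zero as $(s,\zeta)\to 0$, proving the lemma. The essential step is the numerator bound: without the order 3 hypothesis one would only have $h(\zeta)=O(|\zeta|^2)$, so the ratio $|h(\zeta)|/(|\zeta|^2+s)$ would remain of order $1$ and $\theta$ would not go to zero. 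Thus the order 3 condition enters precisely where it is needed — to kill the tangential displacement of the orbit away from the zero locus of $\R\Lambda$ — whereas the denominator lower bound and the AM-GM step are routine.
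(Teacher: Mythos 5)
Your proof is correct and follows essentially the same route as the paper's: split the numerator into the $h(\zeta)$ part (killed by the order 3 condition, since $h(\zeta)=O(|\zeta|^3)$ while $f(h(\zeta),\zeta)=|\zeta|^2+O(|\zeta|^3)$) and the $s\,\zeta\cdot a$ part (killed by boundedness of $a$), with the denominator bounded below using positivity of its two summands. The only difference is bookkeeping: the paper drops one positive summand of the denominator for each piece (cancelling the factor $s$ exactly in the second term), whereas you use the uniform lower bound $\tfrac12(|\zeta|^2+s)$ together with $|\zeta|^3/(|\zeta|^2+s)\leq|\zeta|$ and the AM--GM inequality, which is an equally valid variant of the same argument.
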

\begin{proof}
We have
\[|\theta(s,\zeta)|\leq \frac{|h(\zeta)|}{|f(h(\zeta),\zeta) + s + s \zeta\cdot b(s,\zeta)|} + \frac{s |\zeta| |a(s,\zeta)|}{|f(h(\zeta),\zeta) + s + s \zeta\cdot b(s,\zeta)|} = \theta_1(s,\zeta) + \theta_2(s,\zeta).\]
Choose a small enough neighborhood $V$ of $(0,0)$ in $\mathbb R^+\times \mathbb C^n$ such that for $(s,\zeta)\in V\setminus\{(0,0)\}$ we have $s(1+\zeta\cdot b(s,\zeta))>0$. Since $f(h(\zeta),\zeta)> 0$, in such a neighborhood the denominator is positive, thus
\[\theta_1(s,\zeta) = \frac{|h(\zeta)|}{f(h(\zeta),\zeta) + s(1 + \zeta\cdot b(s,\zeta))} \leq \frac{|h(\zeta)|}{f(h(\zeta),\zeta)}= \frac{O(|\zeta|^3)}{|\zeta|^2 + O(|\zeta|^3)}\]
where the last equality is due to the order 3 condition. It follows that $\theta_1(s,\zeta)\to 0$ for $(s,\zeta)\to 0$.
For the remaining term we have
\[\theta_2(s,\zeta) \leq \frac{s |\zeta| |a(s,\zeta)|}{s(1 + \zeta\cdot b(s,\zeta))} = \frac{ O(|\zeta|)}{1 + O(|\zeta|)}  \]
which also approaches $0$ when $(s,\zeta)\to 0$.\end{proof}
For the sake of notational convenience, we also let 
\[Q(s,\zeta) = \frac{f(h(\zeta),\zeta) + s + s \zeta\cdot b(s,\zeta)}{s+|\zeta|^2},\  R(s,\zeta) = \frac{h(\zeta) + s\zeta \cdot a(s,\zeta)}{s+|\zeta|^2}.\] By arguments similar to those in Lemma \ref{goto0}, it is easy to verify that $Q(s,\zeta)\to 1$ and $R(s,\zeta)\to 0$ as $(s,\zeta)\to 0$.

Now, we consider polar coordinates $\zeta = (\Omega, r)$ in $B\setminus \{0\}\cong S\times (0,1)$ and compute our integral in these coordinates. Applying Fubini,
\[(R_\Delta \chi \Lambda^{-\tau})(\lambda(s)) \approx \int_{(0,1)} dr \frac{r^{2n-1}}{(s+|r|^2)^\tau}I(s,r)\]
where
\[I(s,r) = \int_S d\Omega \frac{\cos(\tau\arctan(\theta'(s,\Omega,r))) + i \sin(\tau\arctan(\theta'(s,\Omega,r)))}{\left(Q'^2(s,\Omega,r) + R'^2(s,\Omega,r) \right)^{\tau/2}}.\]
Here $\theta', Q'$ and $R'$ are, respectively, the expressions of $\theta, Q$ and $R$ in polar coordinates. As a consequence of Lemma \ref{goto0} and of the discussion above, we obtain that $I(s,r)\to C$ for $r\to 0$, uniformly in $s\in \mathbb R^+$, where $C=Area(S)$ is a real, positive constant. In particular, the sign of $\R I(s,r)$ is positive and bounded below for $(s,r)$ small enough. It follows that
\[\R (R_\Delta \chi \Lambda^{-\tau})(\lambda(s)) \geq C' \int_{(0,1)} dr \frac{r^{2n-1}}{(s+|r|^2)^\tau} \]
for some $C'>0$ and $s$ small enough, hence it diverges for $s\to 0$ when $\tau\gg 0$.
\subsection{General dimension}\label{gendim} Let us assume that the dimension $m$ of $O$ is lower than $2n$. By hypothesis, the tangent space of $\pi(O)$ at $0$ is contained in $T^c_0(S)=\{z_0=0\}$. Since we are principally interested in the case when $O$ is totally real, we will actually let $m\leq n$ and suppose that $T_0(\pi(O))$ is spanned by $\partial/\partial x_1,\ldots, \partial/\partial x_m$. Letting $x'=(x_1,\ldots,x_m)$, $x''=(x_{m+1},\ldots,x_n)$ and $y=(y_1,\ldots,y_n)$, a set of defining functions for $\pi(O)$ can be written in the following way:
\[\{(z_0, z) : \R z_0 = h(x'), \I z_0=0, x''=g_1(x'), y=g_2(x')\}\]
where $g_1$ and $g_2$ are vector-valued functions that vanish to first order at $0$. Moreover, because of the order 3 condition, the function $h$ vanishes up to third order at $0$. As before, we will consider a smooth parametrization $\mathbb R^m\to \pi(O)$, of the form
\[\mathbb R^m \ni \xi \to (h(\xi), \xi, g_1(\xi), g_2(\xi))\in T_0(M)\cong \mathbb R(\R z_0)\times \mathbb C^n(z)\]
where we have split the $z$-space according to the decomposition $z=(x',x'',y)$. Writing $z(\xi)$ for $(\xi,g_1(\xi),g_2(\xi))$, a parametrization of $O$ is thus given by
\[\mathbb R^m \ni \xi \to P(\xi)=(h(\xi),f(h(\xi),z(\xi)), z(\xi))\in \mathbb C^{n+1}(\R z_0, \I z_0, z).  \]
Analogously to before, we define a mapping $\Gamma: \mathbb R^+ \times \mathbb R^m\to \mathbb C^{n+1}$, with $\Gamma(s,\xi) = (A(s,\xi)+iB(s,\xi),C(s,\xi))$ where
\begin{align*}
A(s,\xi) &= h(\xi) + s(\xi\cdot a(s,\xi)), \\
B(s,\xi) &= f(h(\xi),z(\xi)) + s(1+\xi\cdot b(s,\xi)), \\
C(s,\xi) &= z(\xi) + sc(s,\xi)
\end{align*}
for some smooth maps $a,b: \mathbb R^+ \times \mathbb R^m \to \mathbb R^m$ and $c:\mathbb R^+\times \mathbb R^m\to \mathbb C^n$, parametrizing the manifold $O_s$ through the point $\lambda(s)=(is,0)$.
Again with the choice of the Levi polynomial as $\Lambda =-iz_0$, we then need to evaluate the following integral:
\begin{equation}\label{toeval} (R_\Delta \chi \Lambda^{-\tau})(\lambda(s)) \approx \int_{B}\frac{d\xi}{\left[ -i (h(\xi) + s(\xi\cdot a(s,\xi))) + (f(h(\xi),z(\xi)) + s (1 + \xi \cdot b(s,\xi)))\right]^\tau}. \end{equation}
We need only observe now that
\[ f(h(\xi),z(\xi)) = |z(\xi)|^2 + O(|\xi|^3) = |\xi|^2 + O(|\xi|^3), \]
where the first equality is due the order 3 condition and the second follows from the facts that by definition $|z(\xi)|^2 = |\xi|^2 + |g_1(\xi)|^2 + |g_2(\xi)|^2$ and $g_1(\xi),g_2(\xi)$ are both $O(|\xi|^2)$. This is all that is needed to prove the analogue of Lemma \ref{goto0} for the argument
$\theta(s,\xi) = -(h(\xi) + s\xi \cdot a(s,\xi))/(f(h(\xi),z(\xi)) + s + s \xi\cdot b(s,\xi))$. The divergence of the integral for a sufficiently large $\tau$ follows then in the same way as before, namely,
\begin{equation}\label{namely}
(R_\Delta \chi \Lambda^{-\tau})(\lambda(s)) \approx \int_{(0,1)} dr \frac{r^{m-1}}{(s+|r|^2)^\tau}I(s,r)
\end{equation}
where $I(s,r)$ approaches, uniformly in $s$, a fixed, positive constant for $r\to 0$.

\section{Some properties of the orbit}

In this section, we will establish some consequences of the hypothesis $T_p(G\cdot p)\subset T^c_p(bM)$. First of all, we derive its implications regarding the dimension of the orbit through $p$. If $S$ if a hypersurface of $\mathbb C^{n+1}$, we denote by $T(S)$ its tangent bundle, by $T^c(S)$ its complex tangent bundle and by $\mathbb C T(S), \mathbb C T^c(S)$ their respective complexifications.
\begin{lemma}
Let $S$ be a strongly pseudoconvex hypersurface of $\mathbb C^{n+1}$, $0\in S$, and let $M$ be a $CR$ submanifold of $S$, $0\in M$, such that $T(M)\subset T^c(S)$. Then $M$ is totally real and, in particular, ${\rm dim}_{\mathbb R} M\leq n$.
\end{lemma}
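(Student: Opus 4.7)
My plan is to first show that $M$ must be totally real, after which the dimension bound is immediate: since $T(M)\subset T^c(S)$ and $T^c_0(S)$ is a complex $n$-dimensional subspace of $T_0(\mathbb C^{n+1})$, any totally real $\mathbb R$-linear subspace of $T^c_0(S)\cong\mathbb C^n$ has real dimension at most $n$, so $\dim_\mathbb R M\leq n$.

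The main content is thus to prove that $M$ is totally real near $0$, and I would do this by a standard commutator/Levi-form argument. Suppose for contradiction that $T^{1,0}_0(M):=\mathbb C T_0(M)\cap T^{1,0}_0(\mathbb C^{n+1})$ is nonzero, and pick a nonzero vector $L_0\in T^{1,0}_0(M)$. Since $M$ is CR, $T^{1,0}(M)$ is a smooth subbundle of $\mathbb C T(M)$, so $L_0$ extends to a smooth local section $L$ of $T^{1,0}(M)$ near $0$ in $M$. The hypothesis $T(M)\subset T^c(S)$ complexifies to $\mathbb C T(M)\subset \mathbb C T^c(S)$; in particular $L$, as a $(1,0)$ vector field in $\mathbb C^{n+1}$ tangent to $S$, is a section of $T^{1,0}(S)$.

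Now since $L$ and $\bar L$ are both tangent to $M$, the commutator $[L,\bar L]$ is a section of $\mathbb C T(M)$ along $M$, and hence by the hypothesis lies in $\mathbb C T^c(S)$. But the Levi form of $S$ at $0$ evaluated on $(L_0,\bar L_0)$ is, up to a conventional sign, the image of $\tfrac{1}{2i}[L,\bar L]_0$ in the line $\mathbb C T_0(S)/\mathbb C T^c_0(S)$; equivalently, after fixing a strictly plurisubharmonic defining function $r$ for $S$ about $0$, it can be written as $\partial\bar\partial r(L_0,\bar L_0)$. Since $[L,\bar L]_0\in\mathbb C T^c_0(S)$, this Hermitian form vanishes at $L_0$. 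Strong pseudoconvexity of $S$ forces the Levi form to be (positive-)definite on $T^{1,0}_0(S)$, and so $L_0=0$, contradicting the choice of $L_0$. Thus $T^{1,0}(M)=0$ near $0$, i.e., $M$ is totally real.

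There is no substantive obstacle here; the only subtlety worth flagging is that $L$ is only defined along $M$, not on a neighborhood in $S$. This is harmless because the bracket $[L,\bar L]$ only requires differentiating each field along directions tangent to the other, and these directions are tangent to $M$, so the entire computation (and the identification with the Levi form via $\partial\bar\partial r$) takes place intrinsically along $M\subset S$.
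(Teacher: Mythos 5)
Your proof is correct and follows essentially the same route as the paper's: extend the nonzero vector to a local section $L$ of $T^{1,0}(M)$, note that $[L,\overline L]$ stays in $\mathbb C T^c(S)$ along $M$ by the hypothesis, and contradict strong pseudoconvexity via the Levi form at $0$. The only cosmetic difference is in handling the fact that $L$ lives only along $M$: the paper extends $L$ to a section $\widetilde L$ of $T^{1,0}(S)$ on a neighborhood in $S$ and observes that $[\widetilde L,\overline{\widetilde L}]$ at points of $M$ coincides with the intrinsic bracket, whereas you argue directly that the bracket and its identification with $\partial\bar\partial r(L_0,\overline L_0)$ can be computed along $M$; both resolve the same point.
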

\begin{proof} Consider the following decompositions of the complexified tangent bundles:
\begin{align*}\mathbb C T(S) &= \mathbb C T^c(S) \oplus T = T^{1,0}(S)\oplus T^{0,1}(S)\oplus T,\\
\mathbb C T(M) &= \mathbb CT^c(M) \oplus R = T^{1,0}(M)\oplus T^{0,1}(M)\oplus R,\end{align*}
where $T$ and $R$ are the following transversal subbundles: (1) $T$ is of dimension $1$ and corresponds to the ``bad'' direction; (2) $R$ can have larger dimension. Assuming, by contradiction, that $M$ is not totally real, it follows that $T^c(M)$, and thus $T^{1,0}(M)$, are non-trivial. If $L$ is a smooth section of $T^{1,0}(M)$ in a neighborhood of $0$ in $M$, then $[L,\overline L]$ is a section of $\mathbb CT^(M)$. Hence by hypothesis, it is a section of $\mathbb CT^c(S)$ along $M$. Consider, now, any smooth extension of $L$ to a section of $T^{1,0}(S)$ over a neighborhood of $0$ in $S$, which we denote by $\widetilde L$. Note that, if $p\in M$, the value of the bracket $[\widetilde L,\overline{\widetilde L}](p)$, performed in $S$, coincides with $[L,\overline L](p)$, performed in $M$. Since $S$ is strongly pseudoconvex, we have that $\pi_T[\widetilde L, \overline{\widetilde L}](0)\neq 0$ where $\pi_T$ is the projection on the $T$-space and, in particular, $[\widetilde L, \overline{\widetilde L}](0)\not\in \mathbb CT^c_0(S)$. This is a contradiction since $[L,\overline L](0)\in \mathbb C T^c_0(S)$ as observed above.
\end{proof}
\begin{corollary} Let $G$ be a Lie group, acting freely by biholomorphisms on an $(n+1)$-dimensional complex manifold with strongly pseudoconvex boundary $S\ni 0$, and denote by $G_0=0\cdot G$ the orbit of $G$ through $0$. If $T_0(G_0)\subset T^c_0(S)$ then ${\rm dim}_{\mathbb R} G\leq n$.
\end{corollary}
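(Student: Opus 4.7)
The strategy is to reduce the corollary to the preceding lemma by taking $M=G_0$. Since $G$ acts freely, the orbit map $g\mapsto g\cdot 0$ is an injective immersion, so $G_0$ is a smooth submanifold of the ambient complex manifold of real dimension equal to $\dim_{\mathbb R} G$, and it suffices to prove $\dim_{\mathbb R} G_0 \leq n$.

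The local hypothesis at the single point $0$ must first be upgraded to the global condition $T(G_0)\subset T^c(S)$ that the lemma requires. For this I would exploit $G$-equivariance: for each $g\in G$, the map $\phi_g(p)=g\cdot p$ is a biholomorphism of the ambient manifold which preserves $S$ (hence $T(S)$) and commutes with the complex structure, so it also preserves the complex tangent bundle $T^c(S)=T(S)\cap iT(S)$. Moreover $\phi_g$ sends the orbit $G_0$ into itself. Consequently $d\phi_g$ maps $T_0(G_0)$ isomorphically onto $T_{g\cdot 0}(G_0)$ and $T^c_0(S)$ into $T^c_{g\cdot 0}(S)$, so the inclusion $T_0(G_0)\subset T^c_0(S)$ propagates, by transitivity of $G$ on $G_0$, to $T_q(G_0)\subset T^c_q(S)$ at every $q\in G_0$. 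The same equivariance shows that the dimension of $T_q(G_0)\cap iT_q(G_0)$ is independent of $q\in G_0$, so $G_0$ is a CR submanifold.

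With all hypotheses of the preceding lemma verified for $M=G_0$, its conclusion gives that $G_0$ is totally real with $\dim_{\mathbb R} G_0\leq n$, which is the desired bound. The only nontrivial point is the propagation step, but once one observes that a biholomorphism preserving $S$ automatically preserves $T^c(S)$ and that the $G$-action is transitive on $G_0$, the argument is immediate; everything else is a direct invocation of the lemma.
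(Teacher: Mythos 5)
Your proposal is correct and follows essentially the same route as the paper: the paper also applies the preceding lemma to $M=G_0$, noting that since $G$ acts by biholomorphisms the dimension of $T^c_p(G_0)$ is constant along the orbit (so $G_0$ is CR) and that the pointwise inclusion $T_0(G_0)\subset T^c_0(S)$ propagates to $T(G_0)\subset T^c(S)$. Your write-up merely makes explicit the equivariance argument that the paper compresses into ``by the same reason,'' so there is nothing to add.
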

\begin{proof}
Since $G$ acts by biholomorphisms, $T^c_p(G_0)$ has the same dimension at every $p\in G_0$, which means that $G_0$ is a $CR$ submanifold of $S$. By the same reason the condition $T_0(G_0)\subset T^c_0(S)$ implies $T(G_0)\subset T^c(S)$. The previous lemma then yields ${\rm dim}_{\mathbb R} G_0\leq n$, thus ${\rm dim}_{\mathbb R} G\leq n$.
\end{proof}
\begin{rem}
It follows that, in the example of the Heisenberg group (see Sect.\ \ref{Heisenbex}), there cannot be any subgroup contained in $\{(0,z)\}$ of dimension bigger than $n$.
\end{rem}
\begin{rem}
The results above also hold, with small adaptations to the proof, when we just assume that $S$ is of finite commutator type rather than strongly pseudoconvex.
\end{rem}
Next, we will show that, under the same hypothesis, the orbit $O=G\cdot 0$ must satisfy the order 3 condition.
\begin{prop}\label{starholds} If the orbit $O$ of $G$ through $0\in bM$ verifies $T_0(O)\subset T^c_0(bM)$, then it satisfies the order 3 condition.
\end{prop}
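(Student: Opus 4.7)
The plan is to reduce, via Lemma \ref{adapt}, to the Folland--Stein coordinates of Section \ref{estim}, in which $\Lambda(z_0,z)=z_0$ and $bM=\{\I z_0 = |z|^2 + O(|z_0||z|+|z|^3)\}$, so that the order 3 condition amounts to the assertion that $\R z_0$ vanishes to order $3$ on $O$ at $0$. Since order-$3$ vanishing of a smooth function at a point is equivalent to order-$3$ vanishing along every smooth curve through that point (the $2$-jet being determined by its values on lines), it suffices to verify this along each one-parameter subgroup orbit $\gamma(s)=\exp(sX)\cdot 0$ for $X\in\mathfrak g$.

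Because $G$ acts by biholomorphisms, each fundamental vector field has the form $\xi=Y+\bar Y$ with $Y=\sum_{k=0}^n a_k(z_0,z)\,\partial_{z_k}$ a holomorphic $(1,0)$-field; the flow equation $\gamma'=\xi\circ\gamma$ becomes the complex system $\gamma'_k=a_k\circ\gamma$, and the tangency hypothesis $T_0(O)\subset T^c_0(bM)$ translates into $a_0(0)=0$. Setting $w_k:=a_k(0)$ and $\alpha_k:=(\partial a_0/\partial z_k)(0)$ for $k\ge 1$, a Taylor expansion of the ODE yields
\[ \gamma_0(s) = \frac{s^2}{2}\sum_{k=1}^n \alpha_k w_k + O(s^3), \]
so it remains to prove that $\sum_k \alpha_k w_k$ is purely imaginary.

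This is the crucial step and the main obstacle, which I would extract from the tangency of $\xi$ to $bM$. Writing $\rho=\I z_0-|z|^2-g$ with $g=O(|z_0||z|+|z|^3)$, the identity $\xi\rho\equiv 0$ on $bM$ expands, to first order in the real coordinates $(\R z_0, x_1,y_1,\dots,x_n,y_n)$ on $bM$, as
\[ \Im a_0 - 2\R\sum_{j\ge 1} a_j\bar z_j + (\text{a multiple of } \R z_0) = 0. \]
The parenthesized term captures the entire linear contribution of $\xi g$, because $g(z_0,0)\equiv 0$ and $g(0,z)=O(|z|^3)$ force every degree-$2$ Taylor monomial of $g$ to contain a $z_0$ or $\bar z_0$ factor, and $a_0(0)=0$ then ensures that $\xi g$ produces no $x_k$ or $y_k$ terms at first order. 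Reading off the $x_k$ and $y_k$ coefficients of the above identity therefore yields $\Im\alpha_k=2\R w_k$ and $\R\alpha_k=2\Im w_k$, i.e.\ $\alpha_k=2i\bar w_k$. Hence $\sum_k\alpha_k w_k = 2i|w|^2\in i\mathbb R$, so that $\R\gamma_0''(0)=0$ and $\R z_0|_O = \R\gamma_0(s) = O(s^3)$, as required. The delicate part of the plan is precisely the verification that the correction $\xi g$ leaves the $x_k$ and $y_k$ equations unperturbed; once this is in place, the identity $\alpha_k = 2i\bar w_k$ and the conclusion follow immediately.
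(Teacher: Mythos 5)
Your argument is correct, and it reaches the paper's conclusion by a somewhat different route in the key step. The overall skeleton is the same as the paper's: you reduce to one-parameter subgroups by observing that, in the exponential chart on the orbit, these are exactly the lines through the origin and the $2$-jet of $\R\Lambda|_O$ is determined by its restrictions to lines (the paper phrases this identically via the parametrization $v\mapsto\exp(v)\cdot 0$), and in both cases the generator is written as the real part of a holomorphic vector field. The difference lies in how the invariance of $bM$ is exploited for a single one-parameter subgroup. The paper straightens the holomorphic field to $\partial/\partial z_1$, uses the resulting $x_1$-independence of the defining function to show that the coefficient of $z_1^2$ in the harmonic part $P$ is real, and concludes that $\R\Lambda$ vanishes identically along the straightened orbit, then transfers back with Lemma \ref{adapt}. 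You instead keep the Folland--Stein normal form fixed (so $\Lambda=z_0$), Taylor-expand the flow ODE $\gamma'_k=a_k\circ\gamma$ to second order, and extract the relation $\alpha_k=2i\bar w_k$ from the first-order jet of the infinitesimal tangency identity $\xi\rho\equiv 0$ on $bM$; your bookkeeping for the error term $g$ (every quadratic monomial carries a $z_0$ or $\bar z_0$ factor, and the $\partial_{z_0}g$ contribution is killed by $a_0(0)=0$) is exactly what is needed for the $x_k,y_k$ coefficients to be unperturbed, and it is correct. What each approach buys: the paper's straightening yields the stronger statement $\R\Lambda\equiv 0$ along the one-dimensional orbit with almost no computation, at the cost of a coordinate change and an appeal to the invariance lemma; your version is purely infinitesimal (only the $1$-jet of the action and the $2$-jet of $bM$ enter), avoids straightening, and makes the comparison with the Heisenberg model of Section 1.4 explicit, since $\alpha_k=2i\bar w_k$ is precisely the relation satisfied by the model action, while delivering exactly the order-$3$ vanishing that the proposition requires.
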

\begin{proof} We start by verifying the claim when ${\rm dim}_{\mathbb R} O =  {\rm dim}_{\mathbb R} G = 1$. In this case $T_e G$ is spanned by a single vector $v$. The image of $v$ by the differential of the action is a vector field $V$, defined in a neighborhood of $0$ in $M$, which is tangent to every orbit of $G$. Since $G$ acts by biholomorphisms, we have $V=\R Z$ for a non singular holomorphic vector field $Z$.

By a choice of complex coordinates $(z_0,z_1,\ldots,z_n)=(z_0,z)$, we can assume that $Z = \partial / \partial z_1$, so that $V = \partial / \partial x_1$ and the orbits of $G$ are parametrized by $G\cong \mathbb R \ni t\to (z_0,t,\ldots,z_n)\in \mathbb C^{n+1}$. Note that, by hypothesis, $\partial/\partial x_1 \in T^c_0(bM)$ hence, up to a linear transformation, we can assume that $T^c_0(M)$ is spanned by $\partial/\partial x_j,\partial/\partial y_j$ ($1\leq j\leq n$), {\it i.e.}\ that $(z_0,z)$ are adapted coordinates. Choose, as before, a local defining equation for $bM$ of the form $\{\I z_0 = f(\R z_0,z)\}$. Since $bM$ is $G$-invariant, $f$ does not depend on the variable $x_1$. We express the second order expansion $f_2$ of $f$ according to Eq.\ \eqref{secord} and we concentrate on $P(z)$, the homogeneous holomorphic polynomial of degree $2$ giving the harmonic part of the expansion. We can write
\[P(z) = Q(z') + z_1\ell(z') + \alpha z_1^2\]
where $Q(z')$ and $\ell(z')$ are, respectively, a homogeneous polynomial of degree $2$ and a complex linear function in $z'=(z_2,\ldots,z_n)$, and where $\alpha \in \mathbb C$. We claim that $\alpha\in \mathbb R$. Indeed, consider the expression of $f_2$ in the real coordinates $(x_j,y_j)_{0\leq j\leq n}$. Since $f_2$ does not depend on $x_1$, the only one of its monomials which includes only the variables $(x_1,y_1)$ is of the form $cy_1^2$ for some $c\in \mathbb R$. Since $cy_1^2 = -c/4(z_1^2 + \overline z_1^2-2z_1\overline z_1)$, it follows that $\alpha = -c/4\in \mathbb R$. In particular, if for $t\in \mathbb R$ we define $z(t) = (t,0,\ldots,0)$, we have $P(z(t)) = \alpha t^2$, which implies $\I P(z(t))\equiv 0$. Let $\gamma(t)=(0,t,0,\ldots,0) = (0,z(t))\in \mathbb C^{n+1}$ parametrize the orbit $O$ of $G$ through $0$ -- also notice that in these coordinates $O$ coincides with $\pi(O)$. With $\Lambda(z_0,z) = z_0 - 2i P(z)$, we have 
\[\R \Lambda(\gamma(t)) = 2 \I P(z(t))\equiv 0,\] 
thus the order 3 condition is certainly satisfied. By Lemma \ref{adapt} and Remark \ref{alsolower}, the condition is also verified in the original coordinates.
 
Let us now turn to the general case. Let $\mathfrak g = T_e G$ be the Lie algebra of $G$, and let $U$ be a neighborhood of $0$ in $\mathfrak g$ such that the exponential map is a diffeomorphism $U\to \exp(U)$ onto a neighborhood of the identity in $G$. It follows that the map $\Gamma: U\to O$ defined as
\[U\ni v\to \Gamma(v) = \exp(v)\cdot 0\in O\subset T_0(bM) \]
gives a regular parametrization of a neighborhood of $0$ in $O$. We must verify that the function $\R \Lambda(\Gamma(v))$, defined on $U$, vanishes to third order at $0$, which is equivalent to verifying that, for every fixed $w\in \mathfrak g$ with $|w|=1$, the function $\mathbb R \ni t \to \R \Lambda(\Gamma(tw))$ vanishes to third order at $0$. This is the same as checking the order 3 condition for the (local) $1$-parameter real subgroup $G_w$ of $G$ generated by $w$; since of course the tangent space of the orbit $O_w$ of $G_w$ is contained in $T_0(O)\subset T^c_0(bM)$, by the discussion above we have that our condition is satisfied for $O_w$.
\end{proof}
%
\section{The tangency condition implies amenability}

Let $\mu$ be a smooth $G$-invariant measure on $M$, and choose $p\in bM$ for which the tangency condition is satisfied. A small $G$-invariant neighborhood $\mathcal U$ of $p$ in $\widetilde M$ is diffeomorphic to $B^k\times G$ with $k=2n+2-d$. The pull-back of $\mu|_{\mathcal U}$ by a diffeomorphism $\phi: B^k\times G\to\mathcal U$ is then a product measure $\nu'\otimes \nu$, where $\nu'$ is some smooth measure on $B^k$ and the Haar measure $\nu$ is biinvariant since $G$ is unimodular.

If $\Delta\in L^1(G)$, a convolution operator $R_\Delta$ is defined on $L^2(M)$ (see \cite{P2}) by
\[(R_\Delta u)(z) = \int_G d\nu(t) \Delta(t)u(z\cdot t)\]
where $z\in M$, $t\in G$ and $u\in L^2(M)$.
\begin{lemma} Let $\Delta\in L^2(G)$ and $h\in L^1(M)$. Then $R_\Delta h\in L^2(M)$.
\end{lemma}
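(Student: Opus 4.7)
The plan is to exploit the local product trivialization $\mathcal U\cong B^k\times G$ set up in the paragraph preceding the lemma, in which the $G$-action is right translation on the second factor and the invariant measure $\mu$ pulls back as $\nu'\otimes\nu$, with $\nu$ biinvariant by the unimodularity hypothesis. Cocompactness of $G\curvearrowright\bar M$ lets me cover a fundamental domain by finitely many such charts, so a subordinate partition of unity reduces the global claim to an estimate on a single chart.

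On the chart, write $z=(x,s)\in B^k\times G$. Using biinvariance of $\nu$, I would rewrite the convolution integral via the change of variable $u=st$:
\[(R_\Delta h)(x,s)=\int_G \Delta(t)\,h(x,st)\,d\nu(t)=\int_G \Delta(s^{-1}u)\,h(x,u)\,d\nu(u).\]
For a.e.\ $x\in B^k$, Fubini applied to $h\in L^1(M)$ gives $h(x,\cdot)\in L^1(G)$; combined with $\Delta\in L^2(G)$, Young's inequality on $G$ in the form $L^2\ast L^1\hookrightarrow L^2$ (available on any locally compact unimodular group) then yields the fiber-wise estimate
\[\|R_\Delta h(x,\cdot)\|_{L^2(G)}\leq \|\Delta\|_{L^2(G)}\,\|h(x,\cdot)\|_{L^1(G)}.\]
Squaring and integrating against $\nu'$ via Fubini produces an $L^2$-bound on $\mathcal U$, and summing over the finite cover delivers $R_\Delta h\in L^2(M)$.

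The most delicate step is ensuring that the resulting quantity $\int_{B^k}\|h(x,\cdot)\|_{L^1(G)}^2\,d\nu'(x)$ is actually finite, since the mixed norm $L^2(B^k;L^1(G))$ does not embed into $L^1(B^k\times G)$ unconditionally. I would address this through a combination of the finite $\nu'$-measure of $B^k$ and Cauchy--Schwarz on the base, together with a truncation/density reduction to $h$ for which $\|h(x,\cdot)\|_{L^1(G)}$ is essentially bounded in $x$; the fact that only $L^2$-membership of $R_\Delta h$ is asserted (rather than an operator bound on $R_\Delta:L^1(M)\to L^2(M)$) is what provides enough flexibility to close this patching.
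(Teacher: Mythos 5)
The paper offers no argument for this lemma at all: its proof is the single line ``See \cite[Lemma 7]{DSP}''. So the only question is whether your self-contained argument closes, and it does not; the gap is precisely the one you flag in your last paragraph, and neither of your proposed repairs can close it. The fiberwise step is fine: in an equivariant chart $\mathcal U\cong B^k\times G$ with $\mu$ pulled back to $\nu'\otimes\nu$, unimodularity and Young's inequality do give $\|R_\Delta h(x,\cdot)\|_{L^2(G)}\leq \|\Delta\|_{L^2(G)}\|h(x,\cdot)\|_{L^1(G)}$ for a.e.\ $x$. But to finish you must integrate the \emph{square} of the right-hand side in $x$, i.e.\ you need $x\mapsto\|h(x,\cdot)\|_{L^1(G)}$ to lie in $L^2(\nu')$, whereas $h\in L^1(M)$ only puts it in $L^1(\nu')$. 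On a finite-measure base the inclusion runs $L^2(\nu')\subset L^1(\nu')$ and not conversely, so finiteness of $\nu'(B^k)$ and Cauchy--Schwarz push in exactly the wrong direction: they cannot upgrade $L^1$-control in the base variable to $L^2$-control.

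The truncation/density idea also fails: if $h_n\to h$ in $L^1(M)$ with each $R_\Delta h_n\in L^2(M)$, you can conclude nothing about $R_\Delta h$ without a bound on $\|R_\Delta h_n\|_{L^2}$ that is uniform in $n$, and such a bound is precisely the operator estimate $\|R_\Delta h\|_{L^2(M)}\leq C\|h\|_{L^1(M)}$ that you (correctly) do not claim; $L^2$-membership is not stable under $L^1$-limits, so the asserted ``flexibility'' is illusory. In fact no purely mixed-norm/Young argument of the kind you outline can work for arbitrary $h\in L^1(M)$: take $h(x,t)=\phi(x)\psi(t)$ supported in the invariant chart, with $\phi\in L^1(\nu')\setminus L^2(\nu')$ and $\psi=\Delta=\mathbf 1_V$ for a relatively compact neighborhood $V$ of $e$ in $G$; then $(R_\Delta h)(x,s)=\phi(x)\,\nu(V\cap s^{-1}V)$, whose $G$-factor is a nonzero bounded compactly supported function, so its $L^2(\nu'\otimes\nu)$-norm is a nonzero multiple of $\|\phi\|_{L^2(\nu')}=\infty$. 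This shows the chartwise strategy needs additional input on $h$ beyond $h\in L^1(M)$ --- for instance fiberwise $L^1(G)$-norms that are bounded (or at least square-integrable) over the base, which is exactly the kind of control the functions $\chi\Lambda^{-\tau}$ of the application do enjoy, cf.\ condition 2) of Definition \ref{amen} and the remark that 1), 2) hold for $0<2\tau<\dim G$ --- or else one must invoke the precise statement and proof of \cite[Lemma 7]{DSP}, which is what the paper does.
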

\begin{proof} See \cite[Lemma 7]{DSP}. \end{proof}
In particular, if $\Lambda$ is the Levi polynomial at $p$, $\chi\in C^{\infty}_c(\widetilde M)$ and $0<\tau<d/2$, we have $R_\Delta \chi \Lambda^{-\tau}\in L^2(M)$ since in this case $\chi\Lambda^{-\tau}\in L^1(M)$, \cite{P2}. We also remark that the set of $\Delta$ which are admissible in the definition of amenability given in there is a $G$-invariant, smooth subspace of $L^2(G)$,  thus, by translating and rescaling $\Delta$ we can assume $\Delta(e)=1$. Our aim is to show that $R_\Delta \chi \Lambda^{-\tau}$ does not extend smoothly through $p$, and in order to do so we will look at its behavior along a certain curve $[0,1]\ni s \to \lambda(s)\in \overline M$ ending at $p$. Note that, if $\chi$ is suitably chosen, the support of $\chi\Lambda^{-\tau}$ lies on $\mathcal U$.

Also, we can choose the diffeomorphism $\phi$ mentioned above in such a way that $\phi(0,e)=p$. Let $U$ be any neighborhood of $e$ in $G$ and let $u=\chi\Lambda^{-\tau}$. It follows that
\[(R_\Delta u)(\lambda(s)) = \int_U d\nu(t) \Delta(t)u(\lambda(s)\cdot t) + \int_{G\setminus U} d\nu(t) \Delta(t)u(\lambda(s)\cdot t) = R_1(s) + R_2(s)\]
and $R_2(s)$ is a smooth function since $u(\lambda(s)\cdot t)$ is smooth and bounded for $(s,t)\in [0,1]\times (G\setminus U)$. Thus, we shall concentrate on $R_1(s)$ for a small enough neighborhood $U$. Setting $v = u\circ \phi$ and $\gamma(s) =  \phi^{-1}(\lambda(s))$, $\gamma(s) = (\gamma_1(s),\gamma_2(s))\subset B^k\times U$, we can rewrite $R_1(s)$ as
\[R_1(s) = \int_U d\nu(t) \Delta(t) v(\gamma_1(s),\gamma_2(s)\cdot t).\]
We will show that $\R (\partial^jR_1(s)/\partial s^j )\to \infty$ as $s\to 0$ by giving a lower estimate for the real part of the integrand. We point out that this property does not depend on the choice of a smooth measure on $B^k\times U$, or, for our purposes, just of a family of smooth measures $\mu_s$ on $\{\lambda(s)\}\times U$ varying smoothly with respect to the parameter $s\in [0,1]$. After a choice of local adapted coordinates in a neighborhood of $p\in bM$, consider the path $\lambda(s)$ defined in Sect.\ \ref{estim}, and the parametrization $(s,\xi)\to \Gamma(s,\xi)$ of the orbits of $G$ through $\lambda(s)$ given in Sect.\ \ref{gendim}. The push-forward of $d\xi$ by $\Gamma$ induces, in a neighborhood of $p$, a measure on the orbits of $G$ through $\lambda$. By the discussion above, we may define on a small enough neighborhood $U$, $\gamma_1([0,1])\times U$, the pull-back of this measure by the diffeomorphism $\phi$. With this choice, the integral $R_1(s)$ becomes precisely the one considered in (\ref{toeval}).
We can now conclude the proof of Thm.\ \ref{main}.
\begin{lemma} Let $G$ act freely on a complex manifold $M$ with boundary, and suppose that the orbit $O$ of $G$ through $p\in bM$ satisfies $T_p(O)\subset H_p(bM)$. It follows that the action of $G$ is amenable.
\end{lemma}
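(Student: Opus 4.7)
The plan is to verify condition 3) of Definition \ref{amen}, since conditions 1) and 2) already hold for any $0 < 2\tau < \dim G$ by the remark preceding the lemma. Concretely, for every nonzero $\Delta \in C^\infty(G)$, I need to show that $R_\Delta \chi \Lambda^{-\tau}$ fails to extend smoothly to $bM$ near $p$. Following the Heisenberg model of Section \ref{Heisenbex}, I will detect this non-smoothness by examining $s$-derivatives along the inward curve $\lambda(s) = (is, 0)$ and proving that, for sufficiently many $j$, $\R\,\partial_s^j (R_\Delta \chi \Lambda^{-\tau})(\lambda(s)) \to +\infty$ as $s \to 0^+$.

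First I would activate the order 3 condition: by hypothesis $T_p(O) \subset T^c_p(bM)$, and Proposition \ref{starholds} then guarantees that the orbit $O$ satisfies the order 3 condition at $p$, unlocking the integral estimates of Sections \ref{estim} and \ref{gendim}. Next I would perform the standard localization set up in the paragraphs preceding the lemma: using a $G$-equivariant trivialization $\phi:B^k\times G\to\mathcal U$ with $\phi(0,e)=p$, decompose $R_\Delta u(\lambda(s)) = R_1(s) + R_2(s)$ by splitting the $G$-integration over a small neighborhood $U$ of the identity and its complement. The tail $R_2$ is smooth in $s$, since on $[0,1]\times(G\setminus U)$ the integrand $\Delta(t)u(\lambda(s)\cdot t)$ is jointly smooth and uniformly bounded; so all singular behavior is concentrated in $R_1$. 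With the measure chosen so that its pull-back along $\phi$ matches the push-forward of Lebesgue measure under the orbit parametrization $\Gamma(s,\xi)$, the integral $R_1(s)$ coincides with \eqref{toeval}, and passing to polar coordinates puts it in the form \eqref{namely},
\[R_1(s) \;\approx\; \int_0^1 \frac{r^{m-1}}{(s+r^2)^\tau}\, I(s,r)\, dr,\]
where $m = \dim_{\mathbb R} G$ and $I(s,r)$ tends uniformly in $s$ to a strictly positive real constant $C$ as $r\to 0$.

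Finally I would differentiate in $s$. Each $\partial_s$ applied to $(s+r^2)^{-\tau-k}$ produces $-(\tau+k)(s+r^2)^{-\tau-k-1}$, whereas the $s$-derivatives of $I(s,r)$ stay uniformly bounded near the origin because $\theta, Q, R$ are smooth in $(s,\xi)$ and approach the respective limits $0, 1, 0$ there. Thus, modulo bounded terms, $\R\,\partial_s^j R_1(s)$ is controlled below by
\[c_j \int_0^1 \frac{r^{m-1}}{(s+r^2)^{\tau+j}}\, dr\]
for some $c_j > 0$. Fixing any $\tau$ with $0 < 2\tau < m$ (so conditions 1) and 2) hold) and then choosing $j$ large enough that $2(\tau+j) > m$, the integral at $s=0$ equals $\int_0^1 r^{m-1-2(\tau+j)}\, dr = +\infty$; monotone convergence then forces $\R\,\partial_s^j R_1(s) \to +\infty$, so $R_\Delta\chi\Lambda^{-\tau}\notin C^\infty(\bar M)$, proving amenability.

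The main obstacle I expect is the careful bookkeeping of the $s$-derivatives of the polar integral so as to isolate a genuinely dominant, positive-signed term: one must verify that differentiation does not convert the $\cos$-factor inside $I(s,r)$ into something that cancels the positive real part. However, since $\theta(s,\zeta)\to 0$ uniformly and is smooth in $(s,\zeta)$, the angular factor $\cos(\tau\arctan\theta)$ stays close to $1$ with uniformly bounded $s$-derivatives, so the positivity of the leading coefficient $c_j$ persists through the differentiations and the estimate goes through.
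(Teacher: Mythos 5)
Your proposal is essentially the paper's own argument: conditions 1) and 2) of Definition \ref{amen} hold since $0<2\tau<\dim G$, Proposition \ref{starholds} converts the tangency hypothesis into the order 3 condition, the localization $R_\Delta u(\lambda(s))=R_1(s)+R_2(s)$ together with the choice of measure identifies $R_1$ with \eqref{toeval} and hence \eqref{namely}, and non-smoothness is detected by differentiating in $s$ and showing that the real part of the leading term diverges. The only structural difference is bookkeeping: the paper fixes $\tau=(d-1)/2$ and takes a single $s$-derivative, so that exactly one subleading term appears and the exponent $\tau+1$ is already supercritical, whereas you keep $\tau\in(0,d/2)$ arbitrary and take $j$ derivatives with $2(\tau+j)>m$. (A cosmetic point: the $j$-th derivative of $(s+r^2)^{-\tau}$ carries the sign $(-1)^j$, so the divergence is to $+\infty$ or $-\infty$ according to the parity of $j$; either way smoothness fails, so nothing is lost.)

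The one step you should repair is the claim that all $s$-derivatives of $I(s,r)$ are uniformly bounded ``because $\theta,Q,R$ are smooth and tend to $0,1,0$.'' That inference is not valid, and the conclusion is false in general: writing $Q(s,\zeta)=1+\bigl(c(\zeta)+s\,\zeta\cdot b(s,\zeta)\bigr)/(s+|\zeta|^2)$ with $c(\zeta)=f(h(\zeta),\zeta)-|\zeta|^2=O(|\zeta|^3)$, one finds $\partial_s Q\sim -c(\zeta)(s+|\zeta|^2)^{-2}$, which at $s=0$ is of size $r^{-1}$, and each further $\partial_s$ costs another factor $(s+r^2)^{-1}$; the same applies to $\theta$ and $R$ and hence to $I$. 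So the Leibniz terms in which some derivatives fall on $I$ are not bounded — they are merely of lower order. The correct (and easily proved) bound is $|\partial_s^{\ell}\theta|+|\partial_s^{\ell}(Q-1)|+|\partial_s^{\ell}R|\le C\,r\,(s+r^2)^{-\ell}$, since the relevant numerators are $O(r^3+sr)$; feeding this into the mixed terms gives contributions $O\bigl(\int_0^1 r^{m}(s+r^2)^{-\tau-j}\,dr\bigr)=O\bigl(s^{(m+1)/2-\tau-j}\bigr)$, which is $O(\sqrt{s})$ times the leading term $\asymp s^{m/2-\tau-j}$, so the divergence of $\R\,\partial_s^j R_1$ persists. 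Note that the paper's parameter choice ($\tau=(d-1)/2$, one derivative) is precisely what keeps this issue minimal there (its assertion that $\partial I/\partial s$ is ``smooth'' is the analogous glib step); with your more general choice of $\tau$ and $j$ you must carry the refined bound explicitly, but once you do, the argument goes through.
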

\begin{proof} With the complex coordinates around $p$ given in Sect.\ \ref{estim}, we take $\Lambda = -iz_0$. Choose $\tau = (d-1)/2$. Using (\ref{namely}) and noticing that the integrand is non-singular for $s\neq 0$, so that the differentiation under the integral sign is justified, we get 
\[\frac{\partial}{\partial s}(R_1(s))\approx \int_{(0,1)} dr \frac{r^{d-1}}{(s+|r|^2)^{\tau+1}}I(s,r) + \int_{(0,1)} dr \frac{r^{d-1}}{(s+|r|^2)^\tau} \frac{\partial}{\partial s}I(s,r) = I_1 + I_2.\]
Since $\partial I(s,r)/\partial s$ is smooth, $I_2$ is absolutely convergent for $s\to 0$ by the choice of $\tau$. From Lemma \ref{starholds}  we have (in the notation of Sect.\ \ref{gendim}) that $h(\xi)=O(|\xi|^3)$, so that the discussion before Eq.\ \eqref{namely} applies. Therefore $\R I_1$ is divergent for $s\to 0$, and we conclude that $R_1(s)$ is not smooth at $s=0$. Hence $R_\Delta \chi \Lambda^{-\tau}$ is not smooth at $p$ along the curve $\lambda(s)$, which shows that the action is amenable.
\end{proof}
%
\section{Complexification of free $G$-actions}\label{HHKsds}
%
\subsection{HHK tubes} Let $X$ and $G$ be, respectively, a real-analytic manifold of dimension $n$ and a Lie group acting freely and properly on $X$ by real-analytic transformations. In \cite{HHK} it is shown that any such $G$-action can be extended to a free and proper action by biholomorphisms on a neighborhood of $X$ in its complexification $X^{\mathbb C} \supset X$. In the same work, the authors also construct a $G$-invariant, strongly plurisubharmonic, non-negative function $\varphi$ which vanishes on $X$, thus by setting
\begin{equation}\label{hhktube}M_\epsilon = \{\varphi <\epsilon\}\subset X^{\mathbb C}\end{equation}
\noindent
for $\epsilon>0$ sufficiently small, one obtains a strongly pseudoconvex $G$-manifold $M_\epsilon$ on which $G$ acts freely by holomorphic transformations. Note, however, that the construction in \cite{HHK} also applies to the much more general case of a proper, not necessarily free $G$-action.
In the paper, the manifolds $M_\epsilon$ are called {\it gauged $G$-complexifications of} $X$. By construction, they are Stein manifolds and so possess a rich collection of holomorphic functions $\mathcal O(M_\epsilon)$ which is invariant under the induced group action. The purpose of this section is to show that the Bergman space $L^2\mathcal O(M_\epsilon)$ is also non-trivial. In order to achieve this, we prove that the sufficient condition of Thm.\ \ref{main} is satisfied when $X$ does not coincide with the underlying manifold of $G$. We refer to \cite{DSP} for a treatment of the case $G=X$.
\begin{prop}\label{HHKtubes} Let $X,G$ and $M_\epsilon$ be as in the previous paragraph, with $\dim_{\mathbb R} G<\dim_{\mathbb R} X$. If $\epsilon$ is small enough, there exists a point $p\in bM_\epsilon$ such that the tangency condition of Thm.\ \ref{main} is satisfied at $p$.
\end{prop}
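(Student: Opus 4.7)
The plan is to reformulate the complex-tangency condition as the vanishing of a moment-map-type function and then show that the zero locus of this function is transverse to $X$ in the right amount. For $\xi\in\mathfrak g$, let $\xi^*$ denote the fundamental vector field generated by $\xi$; since $G$ acts on $\widetilde{X^{\mathbb C}}$ by biholomorphisms, $J\xi^*$ is again a smooth real vector field there. Define $\mu\colon \widetilde{X^{\mathbb C}}\to\mathfrak g^*$ by $\mu(p)(\xi)=d\varphi|_p(J\xi^*(p))$. The $G$-invariance of $\varphi$ already gives $\xi^*(\varphi)\equiv 0$, and hence $T_p(G\cdot p)\subset T_p(bM_\epsilon)$ whenever $p\in bM_\epsilon$; so the condition $T_p(G\cdot p)\subset T^c_p(bM_\epsilon)$ is equivalent to $J T_p(G\cdot p)\subset T_p(bM_\epsilon)$, which in turn is exactly $\mu(p)=0$. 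It therefore suffices to produce, for every small enough $\epsilon>0$, a point $p\in\mu^{-1}(0)\cap\{\varphi=\epsilon\}$.

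Fix $x_0\in X$ and choose real-analytic coordinates $(z_1,\ldots,z_n)=(x+iy)$ near $x_0$ with $x_0\leftrightarrow 0$, $X=\{y=0\}$, and $T_0(G\cdot x_0)={\rm span}\{\partial/\partial x_1,\ldots,\partial/\partial x_d\}$, where $d=\dim_{\mathbb R}G$. Since $\varphi|_X\equiv 0$ and $\varphi\geq 0$, Taylor expansion together with strong plurisubharmonicity yield
\[\varphi(x+iy)=\sum_{j,k=1}^n a_{jk}(x)y_jy_k + O(|y|^3),\]
where $A(x)=(a_{jk}(x))$ is symmetric and positive-definite. In particular, $\partial\varphi/\partial x_j=O(|y|^2)$ and $\partial\varphi/\partial y_j=2\sum_k a_{jk}(x)y_k+O(|y|^2)$.

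Choose a basis $\xi_1,\ldots,\xi_d$ of $\mathfrak g$ and write $\xi_i^*(0)=\sum_j c^i_j \partial/\partial x_j$, so that $J\xi_i^*(0)=\sum_j c^i_j \partial/\partial y_j$. A direct computation then gives
\[\mu_i(x,y)=2\sum_{j,k}c^i_j a_{jk}(x)y_k + O(|y|^2+|x||y|),\]
so the $y$-Jacobian of $\mu$ along $X$ equals the $d\times n$ matrix $2CA(x)$, where $C=(c^i_j)$. Freeness of the action forces $C$ to have rank $d$, and invertibility of $A(x)$ makes $CA(x)$ of rank $d$ as well. By the implicit function theorem, $\mu^{-1}(0)$ is a smooth submanifold of real dimension $2n-d$ near $x_0$ containing $X$, and its tangent space at $x_0$ is $T_{x_0}X\oplus\ker(CA(0))$, with $\dim\ker(CA(0))=n-d\geq 1$ by the assumption $\dim_{\mathbb R}G<\dim_{\mathbb R}X$.

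To finish, pick any $\eta\in\ker(CA(0))\setminus\{0\}$ and let $t\mapsto p(t)$ be a smooth curve in $\mu^{-1}(0)$ with $p(0)=x_0$ and $\dot p(0)=\eta$. Then $\varphi(p(t))=\langle A(0)\eta,\eta\rangle t^2 + O(t^3)$, which is strictly positive for small $t\neq 0$ by the positive-definiteness of $A(0)$. Hence $\varphi|_{\mu^{-1}(0)}$ attains every value in some interval $[0,\epsilon_0)$ in every neighborhood of $x_0$, and for each $\epsilon\in(0,\epsilon_0)$ one obtains $p\in\mu^{-1}(0)\cap bM_\epsilon$ satisfying the tangency condition by construction. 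The heart of the argument is the rank computation for $d\mu$ along $X$, which simultaneously exploits the positive-definiteness of the complex Hessian of $\varphi$ and the linear independence of the fundamental vector fields; the dimensional hypothesis $\dim_{\mathbb R}G<\dim_{\mathbb R}X$ then guarantees that the kernel of this linearization is nontrivial, so that $\mu^{-1}(0)$ genuinely escapes $X$.
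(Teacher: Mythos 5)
Your proof is correct, and it is in essence the same argument as the paper's, repackaged more efficiently. The paper introduces the set $C_G=\{q: T_q(G\cdot q)\subset H(q)\}\cup(X\cap U)$, which is precisely your $\mu^{-1}(0)$: the defining condition of $C_{G_j}$ in Lemma \ref{CG}, namely $\partial/\partial x_1\in\langle J\nabla\varphi\rangle^{\perp}$, is exactly the vanishing of $d\varphi(J\xi^*)$ for the corresponding one-parameter subgroup. Where you differ is in the implementation: the paper treats one $1$-parameter subgroup at a time, straightens its flow so that the generator is $\partial/\partial x_1$ and $\varphi$ is $x_1$-independent, uses the normalization $\varphi=\sum y_j^2+O(|z|^3)$ from \cite[Lemma 3]{DSP}, and then obtains $C_G=\bigcap_j C_{G_j}$ by transversality of the hypersurfaces $C_{G_j}$ (whose tangent planes at $0$ are $\langle\partial/\partial y_j\rangle^{\perp}$); you instead compute the rank of the single $\mathfrak g^*$-valued map $\mu$ in one stroke, deriving the needed normal form $\varphi=\sum a_{jk}(x)y_jy_k+O(|y|^3)$ with $A(x)>0$ directly from $\varphi\geq0$, $\varphi|_X=0$ and strong plurisubharmonicity. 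Your concluding step is actually more detailed than the paper's: where the paper only notes that $\dim C_G=2n-d>n$ forces $C_G\neq X\cap U''$ and hence $C_G\cap bM_\epsilon\neq\emptyset$ for small $\epsilon$, you exhibit a curve in $\mu^{-1}(0)$ tangent to $\ker(CA(0))$ along which $\varphi\sim\langle A(0)\eta,\eta\rangle t^2$, so that $\varphi|_{\mu^{-1}(0)}$ attains every small positive value. Two harmless imprecisions: the $y$-Jacobian of $\mu$ along $X$ is $2CA(x)+O(|x|)$ rather than exactly $2CA(x)$ (only its value at $x_0$, which is what the implicit function theorem uses, is exact), and the equivalence of $\mu(p)=0$ with the tangency condition tacitly uses $d\varphi|_p\neq0$ on $bM_\epsilon$ — which does hold at your points $p(t)$ since $\partial\varphi/\partial y=2A(0)\eta\,t+O(t^2)\neq0$ for small $t\neq0$, and is anyway implicit in the paper's setup.
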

In order to prove the proposition, we fix any point $q\in X$ and we choose local complex coordinates $z=(z_1,\ldots,z_n)$, $z_j=x_j+iy_j$, for a neighborhood $U$ of $q$ in $X^{\mathbb C}$ such that $q\leftrightarrow 0$ and $X\cap U\leftrightarrow \{y_j=0, j=1,\ldots,n\}$. By \cite[Lemma 3]{DSP} we may assume that up to a complex linear transformation we have
\begin{equation}\label{sumy2}\varphi(z) = \sum_{j=1}^n y_j^2 + O(|z|^3)\end{equation}
and that $T_0(G\cdot 0)$ is spanned by $\partial/\partial x_j$, $j=1,\ldots,d$ where $d=\dim_{\mathbb R} G$.

First, however, we will limit ourselves to the case where $d=1$ and $T_0(G\cdot 0)$ is spanned by $\partial/\partial x_1$.  
For any $q\in U\setminus X$, we consider the complex tangent space $H(q)=T^c_q(bM_{\varphi(q)})$ of the level set $bM_{\varphi(q)}=\{\varphi = \varphi(q)\}$ of the function $\varphi$ through $z$, and define the set $C_G=(X\cap U)\cup \{q\in U: T_q(G\cdot q)\subset H(q)\}$. In the following lemma, $J$ is the standard complex structure, orthogonality is intended with respect to the standard Euclidean metric in the coordinates $z$, and $\nabla$ denotes the gradient associated to this metric.
\begin{lemma}\label{CG} For some small neighborhood $U'$ of $0$ in $\mathbb C^n$, the set $C_G\cap U'$ is a smooth hypersurface of $U'$.
\end{lemma}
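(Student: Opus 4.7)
Let $V$ be the real-analytic vector field on a neighborhood of $0$ in $X^{\mathbb C}$ generating the $G$-action, scaled so that $V(0)=\partial/\partial x_1$; since $G$ acts by biholomorphisms, $JV$ is a well-defined smooth vector field with $JV(0)=\partial/\partial y_1$. Set
\[ F(q):=(JV)\varphi(q). \]
The strategy is to identify $C_G\cap U'$ with $\{F=0\}\cap U'$, and then to verify by the implicit function theorem that this zero set is a smooth real hypersurface near $0$.

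\emph{Step 1: Reformulate the tangency condition.} For $q\in U\setminus X$ (taking $U$ small enough) we have $\nabla\varphi(q)\neq 0$, and $H(q)$ consists of those $v$ with $\langle v,\nabla\varphi(q)\rangle=\langle v,J\nabla\varphi(q)\rangle=0$ in the standard Euclidean inner product. Applied to $v=V(q)$, the first orthogonality is simply $V\varphi(q)$, which vanishes identically by the $G$-invariance of $\varphi$. The second, upon using the relation $J^{T}=-J$ for the Euclidean metric, reduces to $(JV)\varphi(q)=F(q)=0$. Moreover $\varphi\geq 0$ with equality exactly on $X$, so $\nabla\varphi$ vanishes along $X$ and hence so does $F$. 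Combining these observations gives $C_G\cap U=\{F=0\}\cap U$.

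\emph{Step 2: Nondegeneracy of $F$ at $0$.} From \eqref{sumy2} we read off $\partial\varphi/\partial y_1(z)=2y_1+O(|z|^2)$, while $JV(z)=\partial/\partial y_1+O(|z|)$. A direct expansion then gives $F(z)=2y_1+O(|z|^2)$, so $dF(0)=2\,dy_1\neq 0$. The implicit function theorem furnishes a neighborhood $U'$ of $0$ in which $\{F=0\}$ is a smooth real hypersurface, proving the lemma.

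The step requiring the most care is Step 1, the reformulation of the vector-field containment $T_q(G\cdot q)\subset H(q)$ as the single scalar equation $F=0$. The gain is substantial: once the $G$-invariance of $\varphi$ has been used to dispose of one of the two a priori orthogonality conditions, all that remains is to control the derivative at $0$ of the scalar function $F=(JV)\varphi$, which is immediate from the normal form \eqref{sumy2} and the adapted-coordinates choice $V(0)=\partial/\partial x_1$. Note also that the same reformulation is what will eventually let us extend the argument to higher-dimensional $G$, by replacing the single scalar $F$ with a system indexed by a basis of $\mathfrak{g}$, but that is not needed for the $d=1$ case at hand.
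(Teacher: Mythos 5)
Your proof is correct and follows essentially the same route as the paper: reduce the tangency condition, using the $G$-invariance of $\varphi$ to dispose of the orthogonality to $\nabla\varphi$, to the vanishing of a single scalar function whose differential at $0$ is a nonzero multiple of $dy_1$, then apply the implicit function theorem. The only difference is cosmetic: the paper first straightens the generator to $\partial/\partial x_1$ by a holomorphic change of coordinates with identity linear part and takes as defining function the $\partial/\partial x_1$-component of $J\nabla\varphi$ (namely $-2y_1+k(z)$ with $k=O(|z|^2)$), whereas you keep the unstraightened generator $V$ and use $F=(JV)\varphi=2y_1+O(|z|^2)$, which is the same quantity up to sign after straightening.
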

\begin{proof} Up to a holomorphic change of coordinates, we can assume that the action of $G$ is generated by the vector field $\partial/\partial x_1$; notice that we can choose this coordinate change in such a way that its linear part at $0$ is the identity, which implies that $\varphi$ still admits the expansion (\ref{sumy2}). Moreover, in the new coordinates $\varphi$ does not depend on the variable $x_1$ and $\partial/\partial x_1\in \langle \nabla \varphi(q) \rangle^\perp$ for all $q$; hence the set $C_G$ coincides with the set $\{q\in U:\partial/\partial x_1\in \langle J\nabla \varphi(q) \rangle^\perp\}$. Again by (\ref{sumy2}),
\[\nabla \varphi(z) = 2\sum_{j=1}^n y_j \frac{\partial}{\partial y_j} + O(|z|^2),\ \ J\nabla \varphi(z) = -2\sum_{j=1}^n y_j \frac{\partial}{\partial x_j} + O(|z|^2)\]
from which we derive
\[J\nabla \varphi(z) = (-2y_1 + k(z)) \frac{\partial}{\partial x_1} + V(z)\]
where $V(z)\in\langle \partial/\partial x_1 \rangle^\perp$ and $k(z)=O(|z|^2)$. It follows that $C_G$ is given by $\{-2y_1 + k(z)=0\}$; by the implicit function theorem, $C_G$ is a smooth hypersurface of a small enough neighborhood of $0$. \end{proof}
\begin{rem}\label{CGtang} From the proof of the previous lemma also follows that the tangent hyperplane of $C_G$ at $0$ is given by $\langle \partial/\partial y_1\rangle^\perp$. In fact, the gradient of the defining function $-2y_1 + k(z)$ at $0$ is a multiple of $\partial/\partial y_1$, and the same is true in the original coordinates because the linear part of the coordinate change equals the identity.
\end{rem}
\begin{proof}[Proof of Prop.\ \ref{HHKtubes}]
Now, we turn back to the case when the dimension of $G$ is arbitrary, and we define the set $C_G$ in the same way as before. Select a collection $G_1,\ldots, G_d$ of local $1$-parameter subgroups of $G$ with the property that $T_0(G_j\cdot 0)$ is generated by $\partial/\partial x_j$ for all $j=1,\ldots,d$. For $q$ in a small neighborhood $U''$ of $0$ in $\mathbb C^n$, then, we have that $T_q(G\cdot q)$ is spanned by the union of the $T_q(G_j\cdot q)$ for $1\leq j\leq d$. It follows that
\[U''\cap C_G = U''\cap \bigcap_{j=1}^d C_{G_j}.\]
By Lemma \ref{CG}, up to shrinking $U''$, each $C_{G_j}$ is a smooth hypersurface; by Remark \ref{CGtang}, then, we derive that $T_0 (C_{G_j}) = \langle \partial/\partial y_j \rangle^\perp$, which implies that the $C_{G_j}$ intersect transversally. Since $d<n$, it follows that $C_G$ is a smooth submanifold of $U''$ of real dimension strictly bigger than $n$. In particular, $C_G$ does not coincide with $X\cap U''$, and as a consequence it must intersect $bM_\epsilon$ for $\epsilon$ small enough.  Any $p\in C_G\cap bM_\epsilon$ satisfies the claim of the proposition.
\end{proof}
\subsection{Example}
Let $X=S^1_\theta \times \mathbb R_{x_1}$, $G=\mathbb R_t$, and let $T=\mathbb C/ \mathbb R$ be the complex cylinder. The complexification $X^{\mathbb C}$ of $X$ is given by $T\times \mathbb C$, in which we consider coordinates $(z_0=\theta+iy_0,z_1)$ where $\theta\in S^1, y_0\in \mathbb R, z_1\in \mathbb C$. 
 A tube $M_\epsilon$ around $X$ can be realized as a domain of $X^{\mathbb C}$ as follows:
 \[M_\epsilon = \{(z_0,z_1): y_0^2+y_1^2<\epsilon^2 \}\subset T\times \mathbb C.\]
Define, now, for any fixed $c\in \mathbb R$ an action $\phi_c$ of $\mathbb R$ on $X$ by
\[\phi_c(t)(\theta,x_1) = (\theta+ ct, x_1+t)\in S^1\times \mathbb R \]
for all $t\in \mathbb R$. For any fixed $c$, this action extends by the same formula to an action on $X^{\mathbb C}$ by biholomorphic transformations which, moreover, preserve each $M_\epsilon$. It is also clear that both the action $\phi_c$ and its extension are free and cocompact on, respectively, $X$ and $\overline M_\epsilon$. A computation shows that, indeed, the tangency condition for the action $\phi_c$ holds along the $\phi_c$-invariant submanifold of $bM_\epsilon$ given by
\[\{(z_0,z_1)\in T\times \mathbb C: y_0^2+y_1^2=\epsilon^2, y_1=-cy_0\}\subset bM_\epsilon. \]
Thus Thm.\ \ref{main} applies, showing that $\dim_GL^2\mathcal O(M_\epsilon)=\infty$. Moreover, it shows that almost every point of $bM_\epsilon$ is a (weakly) peaking point. We remark that in this case, because of the presence of cocompact lattices, the methods of \cite{GHS} already apply. However, even under the hypothesis of unimodularity a Lie group does not, in general, admit such a lattice, see \cite{R}.

\end{document}